\renewcommand{\orcid}[1]{\href{https://orcid.org/#1}{\textcolor[HTML]{A6CE39}{orcid.org/#1}}}
\tikzstyle{startstop} = [rectangle, rounded corners, minimum width=3cm, minimum height=1cm,text centered, draw=black]
\tikzstyle{process} = [rectangle, minimum width=3cm, minimum height=1cm, text centered, draw=black]
\tikzstyle{decision} = [diamond, minimum width=3cm, minimum height=1cm, text centered, draw=black]
\tikzstyle{arrow} = [thick,->,>=stealth]
\setlist[enumerate]{leftmargin=.5in}
\setlist[itemize]{leftmargin=.5in}
\crefname{hypothesis}{Hypothesis}{Hypotheses}
\title{Multi-dimensional summation-by-parts operators for general function spaces: \\ Theory and construction 
\thanks{
\monthyeardate\today 
\corresponding{Jan Glaubitz} 
}}
\author{
Jan Glaubitz\thanks{
Department of Aeronautics and Astronautics, Massachusetts Institute of Technology, USA 
(\email{glaubitz@mit.edu}, \orcid{0000-0002-3434-5563})
}
\and 
Simon-Christian Klein\thanks{
Department of Mathematics, TU Braunschweig, Germany (\email{simon-christian.klein@tu-bs.de}, \orcid{0000-0002-8710-9089})
}
\and 
{Jan Nordstr\"om\thanks{
Department of Mathematics, Link\"oping University, Sweden 
(\email{jan.nordstrom@liu.se}, \orcid{0000-0002-7972-6183})
}} 
\thanks{
Department of Mathematics and Applied Mathematics, University of Johannesburg, South Africa
}
\and 
Philipp \"Offner\thanks{
Institute of Mathematics, Johannes Gutenberg University Mainz, Germany 
(\email{poeffner@uni-mainz.de}, \orcid{0000-0002-1367-1917})
} 
}
\DeclareMathOperator{\diag}{diag}
\newcommand{\scp}[2]{\left\langle{#1, #2}\right\rangle}
\renewcommand{\d}{\mathrm{d}}
\newcommand{\intd}{\, \mathrm{d}}
\newcommand{\N}{\mathbb{N}}
\newcommand{\R}{\mathbb{R}} 
\newcommand{\fnum}{f^{\operatorname{num}}}
\newsavebox{\DelimiterBox}
\newlength{\DelimiterHeight}
\newlength{\DelimiterDepth}
\newsavebox{\ArgumentBox}
\newlength{\ArgumentHeight}
\newlength{\ArgumentDepth}
\newlength{\ResizedDelimiterHeight}
\newlength{\ResizedDelimiterDepth}
\newcommand{\vd}{\mathrm{d}}
\newcommand{\derd}[2]{\frac{\vd #1}{\vd #2}}
\DeclareMathOperator{\lspan}{span}
\newcommand{\be}{\begin{equation}}
\newcommand{\ee}{\end{equation}}
\renewcommand{\phi}{\varphi}
\renewcommand{\epsilon}{\varepsilon}
\begin{document}

\maketitle

\begin{abstract}
Summation-by-parts (SBP) operators allow us to systematically develop energy-stable and high-order accurate numerical methods for time-dependent differential equations. 
Until recently, the main idea behind existing SBP operators was that polynomials can accurately approximate the solution, and SBP operators should thus be exact for them. 
However, polynomials do not provide the best approximation for some problems, with other approximation spaces being more appropriate. 
We recently addressed this issue and developed a theory for \emph{one-dimensional} SBP operators based on general function spaces, coined function-space SBP (FSBP) operators. 
In this paper, we extend the theory of FSBP operators to \emph{multiple dimensions}. 
We focus on their existence, connection to quadratures, construction, and mimetic properties. 
A more exhaustive numerical demonstration of multi-dimensional FSBP (MFSBP) operators and their application will be provided in future works. 
Similar to the one-dimensional case, we demonstrate that most of the established results for polynomial-based multi-dimensional SBP (MSBP) operators carry over to the more general class of MFSBP operators. 
Our findings imply that the concept of SBP operators can be applied to a significantly larger class of methods than is currently done. 
This can increase the accuracy of the numerical solutions and/or provide stability to the methods. 
\end{abstract}

\begin{keywords}
	Summation-by-parts operators, multi-dimensional, mimetic discretization, general function spaces, initial boundary value problems, stability 
\end{keywords}

\begin{AMS}
	65M12, 65M60, 65M70, 65D25 
\end{AMS}

\begin{DOI}
	\url{https://doi.org/10.1016/j.jcp.2023.112370}
\end{DOI}

\section{Introduction} 
\label{sec:introduction} 

Summation-by-parts (SBP) operators mimic integration-by-parts on a discrete level. 
In combination with weakly enforced boundary conditions (BCs), they allow for a systematic development of energy-stable numerical methods for energy-bounded initial boundary value problems (IBVPs) \cite{svard2014review,fernandez2014review,chen2020review}. 
Examples include finite difference (FD) \cite{kreiss1974finite,kreiss1977existence,scherer1977energy,strand1994summation}, essentially non-oscillatory (ENO) and weighted ENO (WENO) \cite{yamaleev2009systematic,fisher2011boundary,carpenter2016entropy}, finite/spectral element (FE/SE) \cite{carpenter2014entropy,abgrall2020analysis,abgrall2021analysis}, discontinuous Galerkin (DG) \cite{gassner2013skew,chen2017entropy}, finite volume (FV) \cite{nordstrom2001finite,nordstrom2003finite}, flux reconstruction (FR) \cite{huynh2007flux,ranocha2016summation,offner2018stability}, and implicit time integration \cite{nordstrom2013summation,linders2020properties,ranocha2021new} methods.
At their core, existing SBP operators are constructed to be exact for polynomials up to a certain degree. 
The underlying assumption---although not always stated explicitly---is that polynomials accurately approximate the solution of the problem at hand. 
However, for some IBVPs, polynomials are not the best choice, and other approximation spaces should be used. 
Many previous works have pointed out the advantages of non-polynomial approximation spaces. 
These include exponentially fitted schemes to solve singular perturbation problems \cite{kadalbajoo2003exponentially,kalashnikova2009discontinuous}, DG methods \cite{yuan2006discontinuous} and (W)ENO reconstructions \cite{christofi1996study,iske1996structure,hesthaven2019entropy} based on non-polynomial approximation spaces, radial basis function (RBF) schemes \cite{fasshauer1996solving,fornberg2015solving,fornberg2015primer}, and methods based on rational function approximations \cite{nakatsukasa2018aaa,gopal2019solving}. 
In \cite{glaubitz2022summation}, we recently developed one-dimensional SBP operators for general (non-polynomial) function spaces, referred to as \emph{function-space SBP (FSBP) operators}. 
Although these can be applied to multi-dimensional problems using a tensor-product strategy, with the advantage of being simple and often efficient, this is not always the best choice since it limits their geometric flexibility. 

Here, we develop a theory for \emph{multi-dimensional} FSBP (MFSBP) operators. 
We focus on establishing their existence, connection to quadratures, construction, and mimetic properties of diagonal-norm MFSBP operators for general function spaces and geometries. 
It is demonstrated that most of the established results for previously developed multi-dimensional polynomial SBP (MSBP) operators \cite{nordstrom2003finite,hicken2016multidimensional,del2018simultaneous} carry over to MFSBP operators. 
We specifically connect the existence of MFSBP operators and positive quadratures that are exact for certain, in general, non-polynomial function spaces. 
Building upon the theoretical existence investigation, we derive a general construction procedure for MFSBP operators. 
An essential part of this procedure is that positive quadratures that are exact for a certain function space must be found if an $\mathcal{F}$-exact MFSBP operator approximating the partial derivative $\partial_x$ is desired. 
Note that the MFSBP operators presented here are not optimized w.r.t. to their grid. 
While we desire to investigate the optimal accuracy and efficiency of our MFSBP operators, such efforts need to be tailored to specific function spaces. 
They will therefore be carried out in future work. 

Our findings imply that the concept of MSBP operators can be applied to a significantly larger class of methods than is currently known. 
Another aspect of introducing the SBP framework in an existing ``old" method is to gain stability, as done for the FV method in \cite{nordstrom2001finite,nordstrom2003finite} and for RBF methods in \cite{glaubitz2022energy}, when combined with weakly enforced boundary data \cite{glaubitz2021stabilizing,glaubitz2021towards}. 
Here, we demonstrated the advantage of MFSBP operators for different linear problems.  
A more exhaustive numerical study will be provided in future work. 

The rest of this work is organized as follows. 
In \cref{sec:notation}, we establish the notation that will be used subsequently. 
In \cref{sec:SBP_operators}, we introduce the concept of MFSBP operators for general function spaces and geometries. 
\cref{sec:connection} addresses the relationship between MFSBP operators and certain surface and volume quadratures, characterizing the theoretical existence of diagonal-norm MFSBP operators. 
In \cref{sec:construction}, we describe how diagonal-norm MFSBP operators can be constructed. 
\Cref{sec:properties} demonstrates that the established mimetic properties of MSBP operators carry over to the larger class of MFSBP operators. 
In \cref{sec:examples}, we provide illustrative examples of MFSBP operators on triangles and circles. 
\Cref{sec:num_tests} offers simplistic numerical tests demonstrating the potential advantage of MFSBP operators. 
Finally, we offer concluding thoughts in \cref{sec:summary}.  
\section{Notation} 
\label{sec:notation}

We use the following notation in the remainder of this work.   
Let $d \in \N$ be a positive integer, $\Omega \subset \R^d$ be an open and bounded domain with piecewise-smooth boundary $\partial \Omega$, and $\boldsymbol{n} = [n_{x_1},\dots,n_{x_d}]^T$ be the corresponding outward pointing unit normal. 
Moreover, let $S = \{ \mathbf{x}_n \}_{n=1}^N$ be a set of $N$ nodes on $\Omega$ and $\boldsymbol{\xi} \in \R^d$ be an arbitrary directional vector with length one. 
If $f \in C^1(\Omega)$ is a continuously differentiable function on $\Omega$, then 
\begin{equation} 
\begin{aligned}
	\mathbf{f} & = [ f(\mathbf{x}_1), \dots, f(\mathbf{x}_N) ]^T, \\ 
	\mathbf{f_{\boldsymbol{\xi}}} & = [ (\partial_{\boldsymbol{\xi}} f)(\mathbf{x}_1), \dots, (\partial_{\boldsymbol{\xi}} f)(\mathbf{x}_N) ]^T, 
\end{aligned}
\end{equation} 
denote the nodal values of $f$ and its directional derivative $\partial_{\boldsymbol{\xi}} f$ in $\boldsymbol{\xi}$-direction on the node set $S$, respectively. 
Here, $\partial_{\boldsymbol{\xi}} f$ is the directional derivative defined as $\partial_{\boldsymbol{\xi}} f = \nabla f \cdot \boldsymbol{\xi}$. 
Note that, for $i \in \{1,\dots,d\}$, the usual partial derivative $\partial_{x_i} f$ corresponds to the directional derivative $\partial_{\boldsymbol{\xi}} f$ in the canonical coordinate direction $\boldsymbol{\xi} = [0,\dots,0,1,0,\dots,0]$ with the number $1$ in the $i$th component. 

Let $\mathcal{P}_p(\R^d)$ be the space of the $d$-dimensional polynomials of total degree up to $p$, which has dimension $n_p^* = \binom{p+d}{d}$. 
Given an exponent vector $\boldsymbol{\alpha} = [\alpha_1,\dots,\alpha_d] \in \N^d$, recall that the total degree of a monomial $x_1^{\alpha_1} \cdots x_d^{\alpha_d}$ is the $\ell^1$-norm of $\boldsymbol{\alpha}$, $\| \boldsymbol{\alpha} \|_1 = \alpha_1 + \dots + \alpha_d$, and that the total degree of a polynomial is the largest total degree of all (non-zero coefficient) monomials spanning the polynomial. 
$\mathcal{P}_p(\R^d)$ is therefore spanned by the monomial basis functions, 
\begin{equation} 
	\boldsymbol{x^{\alpha}} := x_1^{\alpha_1} \cdots x_d^{\alpha_d}, 
	\quad \| \boldsymbol{\alpha} \|_1 \leq p,
\end{equation} 
where $\boldsymbol{\alpha} = [\alpha_1,\dots,\alpha_d]$ and $\boldsymbol{x} = [x_1,\dots,x_d]$.
The nodal values of the monomial basis functions $\boldsymbol{x}^{\boldsymbol{\alpha}}$ and their derivative in $\boldsymbol{\xi}$-direction, $\partial_{\boldsymbol{\xi}} \boldsymbol{x}^{\boldsymbol{\alpha}}$, on the node set $S$ are denoted by $\mathbf{x^{\boldsymbol{\alpha}}}$ and $\mathbf{(x^{\boldsymbol{\alpha}})_{\boldsymbol{\xi}}}$, respectively. 
\section{Multi-dimensional SBP operators for general function spaces} 
\label{sec:SBP_operators} 

We start by describing how MSBP operators can be extended to general function spaces.

\subsection{Multi-dimensional polynomial SBP operators}
\label{sub:classic_SBP}

MSBP operators were first introduced for FV methods in \cite{nordstrom2003finite,svard2004stability,svard2006stable} and later for multi-block FD and SE methods in \cite{hicken2016multidimensional,del2018simultaneous}. 
The first derivative operator is defined as follows. 

\begin{definition}[SBP operators]\label{def:SBP} 
	$D_{\boldsymbol{\xi}} = P^{-1} Q_{\boldsymbol{\xi}}$ is an \emph{SBP operator of (total) degree $p$}, approximating the first derivative operator $\partial_{\boldsymbol{\xi}}$ on the node set $S$, if 
	\begin{enumerate}
		\item[(i)] 
		$D_{\boldsymbol{\xi}} \mathbf{x^{\boldsymbol{\alpha}}} = \mathbf{(x^{\boldsymbol{\alpha}})_{\boldsymbol{\xi}}}$ for $\| \boldsymbol{\alpha} \|_1 \leq p$,
		
		\item[(ii)]
		the norm matrix $P$ is symmetric and positive definite (SPD), 
		
		\item[(iii)] 
		$Q_{\boldsymbol{\xi}} + Q_{\boldsymbol{\xi}}^T = B_{\boldsymbol{\xi}}$, and 
		
		\item[(iv)] 
		the boundary matrix $B_{\boldsymbol{\xi}}$ satisfies 
		\begin{equation}\label{eq:boundary_matrix_poly}
			\left( \mathbf{x^{\boldsymbol{\alpha}}} \right)^T B_{\boldsymbol{\xi}} \mathbf{x^{\boldsymbol{\beta}}} = \oint_{\partial \Omega} \boldsymbol{x^{\alpha}} \boldsymbol{x^{\beta}} ( \boldsymbol{\xi} \cdot \boldsymbol{n} ) \intd s, \quad \| \boldsymbol{\alpha} \|_1, \| \boldsymbol{\beta} \|_1 \leq q,
		\end{equation}
		where $q \geq p$ and $( \boldsymbol{\xi} \cdot \boldsymbol{n} )$ is the inner product of the directional vector $\boldsymbol{\xi}$ and the outward pointing unit normal $\boldsymbol{n}$.
		
	\end{enumerate}
\end{definition}

Relation (i) ensures that $D_{\boldsymbol{\xi}}$ is an accurate approximation of the continuous operator $\partial_{\boldsymbol{\xi}}$ by requiring the operator to be exact for all $d$-dimensional polynomials of total degree up to $p$. 
Condition (ii) guarantees that $P$ induces a proper discrete inner product and norm, which are respectively given by $\scp{\mathbf{u}}{\mathbf{v}}_P = \mathbf{u}^T P \mathbf{v}$ and $\|\mathbf{u}\|^2_P = \mathbf{u}^T P \mathbf{u}$ for $\mathbf{u},\mathbf{v} \in \R^N$. 
Relation (iii) encodes the SBP property, which allows us to mimic integration-by-parts (IBP) on a discrete level. 
Recall that IBP for the $\boldsymbol{\xi}$-derivative reads 
\begin{equation}\label{eq:IBP}
	\int_{\Omega} u (\partial_{\boldsymbol{\xi}} v) \intd \boldsymbol{x} + \int_{\Omega} (\partial_{\boldsymbol{\xi}} u) v \intd \boldsymbol{x} 
		= \oint_{\partial \Omega} u v ( \boldsymbol{\xi} \cdot \boldsymbol{n} ) \intd s, \quad \forall u, v \in C^1(\Omega).
\end{equation}
The discrete version of \cref{eq:IBP}, which follows from (iii), is 
\begin{equation}\label{eq:SBP}
	\mathbf{u}^T P ( D_{\boldsymbol{\xi}} \mathbf{v} ) + ( D_{\boldsymbol{\xi}} \mathbf{u} )^T P \mathbf{v} = \mathbf{u}^T B_{\boldsymbol{\xi}} \mathbf{v}, \quad \forall \mathbf{u}, \mathbf{v} \in \R^N. 
\end{equation} 
Note that the two terms on the left-hand side of \cref{eq:SBP} approximate the related terms on the left-hand side of \cref{eq:IBP}.
Finally, (iv) in \cref{def:SBP} ensures that also the right-hand side of \cref{eq:SBP} accurately approximates the right-hand side of \cref{eq:IBP}. 
To this end, the boundary operator $B_{\boldsymbol{\xi}}$ must be exact for $d$-dimensional polynomials of total degree up to $p$.

\subsection{Multi-dimensional SBP operators for general function spaces} 
\label{sub:gen_SBP}

The main idea behind (i) in \cref{def:SBP} is that polynomials of total degree up to $p$ approximate the PDE solution well for $p$ high enough, and the differentiation operator $D_{\boldsymbol{\xi}}$ should thus be exact for them. 
That is, we can reformulate (i) in \cref{def:SBP} as 
\begin{equation}\label{eq:exactness_poly}
	D_{\boldsymbol{\xi}} \mathbf{f} = \mathbf{f_{\boldsymbol{\xi}}} \quad \forall f \in \mathcal{P}_p(\R^d),
\end{equation}
where $\mathcal{P}_p(\R^d)$ denotes the space of the $d$-dimensional polynomials of total degree up to $p$. 
It is now clearly possible to replace $\mathcal{P}_p(\R^d)$ with any other subspace of $C^1(\Omega)$. 
Suppose it is reasonable to approximate the solution $u$ using the function space $\mathcal{F} \subset C^1(\Omega)$. 
We then modify \cref{eq:exactness_poly} to 
\begin{equation}
	D_{\boldsymbol{\xi}} \mathbf{f} = \mathbf{f_{\boldsymbol{\xi}}} \quad \forall f \in \mathcal{F}.
\end{equation} 
Similarly, we can modify \cref{eq:boundary_matrix_poly} to 
\begin{equation}
	\mathbf{f}^T B_{\boldsymbol{\xi}} \mathbf{g} = \oint_{\partial \Omega} f g ( \boldsymbol{\xi} \cdot \boldsymbol{n} ) \intd s, \quad \forall f,g \in \mathcal{G},
\end{equation} 
with $\mathcal{F} \subset \mathcal{G}$, where for simplicity, we choose $\mathcal{G}= \mathcal{F}$. 
As for one-dimensional SBP operators \cite{glaubitz2022summation}, it is now natural to formulate the following generalization of multi-dimensional SBP operators, which we refer to as multi-dimensional function-space SBP (MFSBP) operators. 

\begin{definition}[MFSBP operators]\label{def:FSBP}
	$D_{\boldsymbol{\xi}} = P^{-1} Q_{\boldsymbol{\xi}}$ is an \emph{$\mathcal{F}$-based MFSBP operator}, approximating  the first derivative operator $\partial_{\boldsymbol{\xi}}$ on the node set $S$, if 
	\begin{enumerate}
		\item[(i)] 
		$D_{\boldsymbol{\xi}} \mathbf{f} = \mathbf{f_{\boldsymbol{\xi}}}$ for all $f \in \mathcal{F}$,
		
		\item[(ii)]
		the norm matrix $P$ is SPD, 
		
		\item[(iii)] 
		$Q_{\boldsymbol{\xi}} + Q_{\boldsymbol{\xi}}^T = B_{\boldsymbol{\xi}}$, and 
		
		\item[(iv)] 
		the boundary matrix $B_{\boldsymbol{\xi}}$ satisfies 
		\begin{equation}\label{eq:boundary_matrix}
			\mathbf{f}^T B_{\boldsymbol{\xi}} \mathbf{g} = \oint_{\partial \Omega} f g ( \boldsymbol{\xi} \cdot \boldsymbol{n} ) \intd s, \quad \forall f,g \in \mathcal{F}.
		\end{equation}
		
	\end{enumerate}
\end{definition} 

Note that only (i) and (iv) in \cref{def:FSBP} differ from \cref{def:SBP}. 
Consequently, most of the results for polynomial MSBP operators carry over to our MFSBP operators, as we will demonstrate in the remainder of this paper. 
A similar observation was made in the one-dimensional case \cite{glaubitz2022summation}. 

For simplicity, we focus on diagonal-norm MFSBP operators, for which the norm matrix $P$ is diagonal. 
This allows us to connect them to certain quadratures (see \cref{sec:connection}), which simplifies their construction (see \cref{sec:construction}) and analysis (see \cref{sec:properties}). 
Furthermore, diagonal-norm SBP operators enable certain splitting techniques \cite{nordstrom2006conservative,gassner2016split,ranocha2018stability} and the extension to variable coefficients including curvilinear coordinates \cite{svard2004coordinate,nordstrom2017conservation,ranocha2017extended,chan2019efficient}. 
For the same reasons, we also restrict the discussion to diagonal boundary matrices \cite{nordstrom2017conservation,del2018simultaneous,chen2020review}.  
\section{MFSBP operators and quadratures} 
\label{sec:connection} 

We now investigate the connection between MFSBP operators and certain quadratures. 
Similar to the one-dimensional case \cite{glaubitz2022summation}, we show that an MFSBP operator exists if and only if a specific quadrature exists. 

\subsection{Volume quadratures} 
\label{sub:connection_CFs} 

We start by briefly commenting on multi-dimensional volume and surface quadratures \cite{engels1980numerical,cools1997constructing,davis2007methods}.
Consider the volume quadrature $I_{X,W}$ on $\Omega \subset \R^d$ with nodes $X = \{\mathbf{x}_n\}_{n=1}^N \subset \Omega$ and weights $W = \{w_n\}_{n=1}^N$, 
\begin{equation} 
	I_{X,W}[f] 
		:= \sum_{n=1}^N w_n f(\mathbf{x}_n) 
		\approx \int_{\Omega} f(\boldsymbol{x}) \intd \boldsymbol{x} 
		=: I[f],
\end{equation}
where $f: \Omega \to \R$ is a continuous function. 
We say that the quadrature $I_{X,W}$ is \emph{positive} if its weights are positive, i.\,e., if $w_n > 0$ for all $w_n \in W$. 
Furthermore, given a function space $\mathcal{G}$, we say that $I_{X,W}$ is \emph{$\mathcal{G}$-exact} if the \emph{exactness condition} 
\begin{equation}\label{eq:exactness_cond_volume}
	I_{X,W}[g] = I[g] \quad \forall g \in \mathcal{G} 
\end{equation} 
holds.

\subsection{Characterizing the existence of MFSBP operators} 
\label{sub:connection_char} 

As stated above, the restriction to diagonal norm matrices $P$ allows us to characterize the existence of diagonal-norm MFSBP operators in terms of positive quadratures. 
To this end, we introduce the concept of Vandermonde matrices. 
Let $\{ f_1,\dots,f_K \}$ be a basis of $\mathcal{F} \subset C^1(\Omega)$. 
Evaluating the basis functions at the nodes $S = \{\mathbf{x}_n\}_{n=1}^N$ and writing the corresponding function values as the columns of a matrix, we get the Vandermonde matrix 
\begin{equation}\label{eq:Vand}
	V = [\mathbf{f_1},\dots,\mathbf{f_K}] = 
	\begin{bmatrix} 
		f_1(\mathbf{x}_1) & \dots & f_K(\mathbf{x}_1) \\ 
		\vdots & & \vdots \\ 
		f_1(\mathbf{x}_N) & \dots & f_K(\mathbf{x}_N)
	\end{bmatrix}. 
\end{equation} 
Furthermore, for $\boldsymbol{\xi} \in \R^d$, we denote by 
\begin{equation}\label{eq:dx_F2}
	\partial_{\boldsymbol{\xi}} (\mathcal{F}^2) 
		= \{ \, \partial_{\boldsymbol{\xi}}(f g) \mid \, f, g \in \mathcal{F} \, \} 
		= \{ \, (\partial_{\boldsymbol{\xi}} f) g + f (\partial_{\boldsymbol{\xi}} g) \mid \, f, g \in \mathcal{F} \, \}
\end{equation}
the space of functions corresponding to the derivative in $\boldsymbol{\xi}$-direction of the product of two functions from $\mathcal{F} \subset C^1(\Omega)$. 
We are now positioned to formulate our main result on the connection between diagonal-norm MFSBP operators and positive and $\partial_{x_i}(\mathcal{F}^2)$-exact quadratures. 

\begin{theorem}\label{thm:connection}
	Let $\mathcal{F} \subset C^1(\Omega)$ and assume that the Vandermonde matrix $V$ in \cref{eq:Vand} has linearly independent columns. 
	Further, let $B_{\boldsymbol{\xi}}$ be a boundary matrix satisfying (iv) in \cref{def:FSBP}.
	Then there exists a diagonal-norm $\mathcal{F}$-based MFSBP operator $D_{\boldsymbol{\xi}} = P^{-1} Q_{\boldsymbol{\xi}}$ (with $Q_{\boldsymbol{\xi}} + Q_{\boldsymbol{\xi}}^T = B_{\boldsymbol{\xi}}$) on the nodes $S$ if and only if there exists a positive and $\partial_{\boldsymbol{\xi}}(\mathcal{F}^2)$-exact quadrature on $\Omega$ with nodes $S$. 
\end{theorem}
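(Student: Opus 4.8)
The plan is to translate both the operator conditions and the quadrature conditions into statements about the same Gram-type matrices built from the Vandermonde matrix $V$, and to identify the diagonal of $P$ with the vector of quadrature weights. Throughout I write the diagonal norm as $P = \diag(w_1,\dots,w_N)$, so that $P$ is SPD if and only if every $w_n>0$, i.e. if and only if the quadrature $I_{X,W}$ with $W=\{w_n\}$ and nodes $S$ is positive. Since (iii) forces $B_{\boldsymbol{\xi}}=B_{\boldsymbol{\xi}}^T$ (a sum $Q_{\boldsymbol{\xi}}+Q_{\boldsymbol{\xi}}^T$ is always symmetric, and this is in any case guaranteed by the restriction to diagonal boundary matrices), the remaining content is to realise (i) and (iii) for the prescribed $B_{\boldsymbol{\xi}}$. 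Rewriting (i) as $Q_{\boldsymbol{\xi}}\mathbf{f}=P\mathbf{f_{\boldsymbol{\xi}}}$ for all $f\in\mathcal{F}$ and collecting basis functions into columns, (i) becomes $Q_{\boldsymbol{\xi}}V=PV_{\boldsymbol{\xi}}$, where $V_{\boldsymbol{\xi}}=[\mathbf{(f_1)_{\boldsymbol{\xi}}},\dots,\mathbf{(f_K)_{\boldsymbol{\xi}}}]$ is the derivative Vandermonde matrix.

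The forward direction is the easy one. Given a diagonal-norm MFSBP operator I set $w_n := P_{nn}>0$, so $I_{X,W}$ is positive. Taking $\mathbf{u}=\mathbf{f}$ and $\mathbf{v}=\mathbf{g}$ with $f,g\in\mathcal{F}$ in the discrete integration-by-parts identity \cref{eq:SBP} (which follows from (ii) and (iii)) and using (i) to substitute $D_{\boldsymbol{\xi}}\mathbf{f}=\mathbf{f_{\boldsymbol{\xi}}}$ and $D_{\boldsymbol{\xi}}\mathbf{g}=\mathbf{g_{\boldsymbol{\xi}}}$, the left-hand side collapses, because $P$ is diagonal, to $\sum_n w_n\,\partial_{\boldsymbol{\xi}}(fg)(\mathbf{x}_n)=I_{X,W}[\partial_{\boldsymbol{\xi}}(fg)]$. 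The right-hand side equals $\oint_{\partial\Omega} fg(\boldsymbol{\xi}\cdot\boldsymbol{n})\intd s$ by (iv), which by the continuous identity \cref{eq:IBP} is exactly $\int_\Omega\partial_{\boldsymbol{\xi}}(fg)\intd\boldsymbol{x}=I[\partial_{\boldsymbol{\xi}}(fg)]$. Hence $I_{X,W}$ integrates every $\partial_{\boldsymbol{\xi}}(fg)$ exactly, and by linearity it is $\partial_{\boldsymbol{\xi}}(\mathcal{F}^2)$-exact.

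For the reverse direction I set $P=\diag(w_1,\dots,w_N)$, which is SPD by positivity, and look for $Q_{\boldsymbol{\xi}}$ with $Q_{\boldsymbol{\xi}}V=PV_{\boldsymbol{\xi}}$ and $Q_{\boldsymbol{\xi}}+Q_{\boldsymbol{\xi}}^T=B_{\boldsymbol{\xi}}$. The natural ansatz is $Q_{\boldsymbol{\xi}}=S+\tfrac12 B_{\boldsymbol{\xi}}$ with $S$ antisymmetric, which satisfies (iii) automatically (using $B_{\boldsymbol{\xi}}=B_{\boldsymbol{\xi}}^T$) and reduces (i) to the single linear requirement $SV=M$ with $M:=PV_{\boldsymbol{\xi}}-\tfrac12 B_{\boldsymbol{\xi}}V$. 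A necessary condition for an antisymmetric $S$ to solve $SV=M$ is that $V^TSV=V^TM$ be antisymmetric, i.e. $V^TM+M^TV=0$. A short computation gives $V^TM+M^TV=V^TPV_{\boldsymbol{\xi}}+V_{\boldsymbol{\xi}}^TPV-V^TB_{\boldsymbol{\xi}}V$, whose $(j,k)$ entry is precisely $I_{X,W}[\partial_{\boldsymbol{\xi}}(f_jf_k)]-I[\partial_{\boldsymbol{\xi}}(f_jf_k)]$ (again by \cref{eq:IBP} and (iv)). Thus this compatibility condition holds if and only if the quadrature is $\partial_{\boldsymbol{\xi}}(\mathcal{F}^2)$-exact, which is what ties the two statements together.

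The step I expect to be the crux is the \emph{sufficiency}: that antisymmetry of $V^TM$ guarantees the existence of an antisymmetric $S$ with $SV=M$. This is exactly where the hypothesis $\rank V=K$ (linearly independent columns) enters. I would isolate it as a linear-algebra lemma: if $V\in\R^{N\times K}$ has full column rank and $V^TM$ is antisymmetric, then an antisymmetric $S$ with $SV=M$ exists. To prove it I split $\R^N=\operatorname{col}(V)\oplus\operatorname{col}(V)^\perp$ and write $S$ in $2\times 2$ block form relative to this splitting. The equation $SV=M$ fixes the two blocks whose columns lie in $\operatorname{col}(V)$; the antisymmetry requirement then imposes equations on the blocks whose only potential obstruction is the $(\operatorname{col}(V),\operatorname{col}(V))$ block. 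A direct computation with the left inverse $V^+=(V^TV)^{-1}V^T$ shows this obstruction is a multiple of the symmetric part of $(V^+)^T(V^TM)V^+$, which vanishes because conjugation preserves antisymmetry and $V^TM$ is antisymmetric. The blocks touching $\operatorname{col}(V)^\perp$ are then free and are chosen to enforce antisymmetry. With such an $S$ in hand, $Q_{\boldsymbol{\xi}}=S+\tfrac12 B_{\boldsymbol{\xi}}$ and $D_{\boldsymbol{\xi}}=P^{-1}Q_{\boldsymbol{\xi}}$ satisfy (i)--(iv), which completes the reverse direction and the proof.
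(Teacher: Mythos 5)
Your proposal is correct, and it is essentially the argument the paper itself invokes: the paper does not spell out a proof of \cref{thm:connection} but defers to the polynomial MSBP proofs in \cite{hicken2016multidimensional,del2018simultaneous}, which proceed exactly as you do---identify the weights with the diagonal of $P$, use the discrete SBP identity on $\mathcal{F}$ for necessity, and for sufficiency make the ansatz $Q_{\boldsymbol{\xi}} = S + \tfrac{1}{2}B_{\boldsymbol{\xi}}$ with $S$ antisymmetric, reducing everything to solving $SV = PV_{\boldsymbol{\xi}} - \tfrac{1}{2}B_{\boldsymbol{\xi}}V$ under the compatibility condition that $V^T M$ be antisymmetric, which is equivalent to $\partial_{\boldsymbol{\xi}}(\mathcal{F}^2)$-exactness of the quadrature. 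Your linear-algebra lemma (full column rank of $V$ plus antisymmetry of $V^TM$ implies an antisymmetric solution $S$ exists) is sound and can even be verified with the explicit choice $S = MV^+ - (V^+)^T M^T + (V^+)^T M^T V V^+$, so the argument is complete.
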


\cref{thm:connection} is well-known for polynomial MSBP operators. 
 The assertion is proved using the same arguments as in the proofs of \cite[Theorems 3.2 and 3.3]{hicken2016multidimensional} and \cite[Theorem 2]{del2018simultaneous}. 

\begin{remark} 
	The Vandermonde matrix $V$ in \cref{eq:Vand} is ensured to have linearly independent columns if the node set $S$ is $\mathcal{F}$-unisolvent.\footnote{We say that the node set $S \subset \Omega$ is \emph{$\mathcal{F}$-unisolvent}, where $\mathcal{F} \subset C(\Omega)$ is a linear function space if $f \in \mathcal{F}$ and $f(\mathbf{x}) = 0$ for all $\mathbf{x} \in S$ implies $f \equiv 0$.} 
	This requirement is not restrictive since we never encountered a Vandermonde matrix $V$ with linearly dependent columns for $N \geq K$ in our numerical tests. 
	This might not be surprising since $V$ having linearly independent columns can be ensured if sufficiently many dense nodes are used \cite[Section 2.1]{glaubitz2022constructing}. 
\end{remark}

\subsection{Surface quadratures}
\label{sub:surface_quad}

We can also connect the existence of the boundary operator $B_{\boldsymbol{\xi}}$ in \cref{def:FSBP} to certain surface quadratures on the boundary $\Gamma = \partial \Omega$. 
Consider the surface quadrature $I^{\Gamma}_{Y,V}$ on the closed boundary $\Gamma = \partial \Omega$ with surface nodes $Y = \{ \mathbf{x}_m \}_{m=1}^M \subset \Gamma$ and weights $V = \{v_m\}_{m=1}^M$ satisfying 
\begin{equation}\label{eq:surface_QF}
	I_{Y,V}^{(\Gamma)}[f] 
		:= \sum_{m=1}^M v_m ( \boldsymbol{\xi} \cdot \boldsymbol{n}(\mathbf{x}_m) ) f(\mathbf{x}_m) 
		\approx \oint_{\Gamma} f ( \boldsymbol{\xi} \cdot \boldsymbol{n} ) \intd s 
		=: I^{(\Gamma)}[f],
\end{equation} 
where $f: \Omega \to \R$ again is a continuous function. 
We say that the surface quadrature $I_{Y,V}^{(\Gamma)}$ in \cref{eq:surface_QF} is \emph{positive} if its weights $v_1,\dots,v_M$ are positive. 
Furthermore, given a function space $\mathcal{G} \subset C(\Omega)$, we say that $I_{Y,V}^{(\Gamma)}$ is \emph{$\mathcal{G}$-exact} if the exactness condition 
\begin{equation}\label{eq:exactness_cond_surface}
	I_{Y,V}^{(\Gamma)}[g] = I^{(\Gamma)}[g] \quad \forall g \in \mathcal{G} 
\end{equation} 
holds. 
We pre-empt that the function space $\mathcal{G}$ for which we need exact surface quadratures will be $\mathcal{F}^2$ for reasons explained in the following. 

\begin{remark} 
	The relation \cref{eq:surface_QF} assumes that we have access to the exact normals. 
	This does not introduce a constraint for domains with a piecewise linear boundary, where the normals remain constant on each linear boundary segment.  
	However, for complicated geometries, the normals include higher-order terms that must be integrated alongside $\mathcal{F}^2$ by the surface quadrature. 
	This integration may necessitate a larger number of surface points. 
	Importantly, the added complexity is not exclusive to the proposed MFSBP operators but is generally valid.
\end{remark}

\subsection{Characterizing the existence of boundary operators} 
\label{sub:connection_boundaryM} 

We now connect the boundary operators $B_{\boldsymbol{\xi}}$, which are crucial to \cref{def:FSBP} of MFSBP operators $D_{\boldsymbol{\xi}} = P^{-1} Q_{\boldsymbol{\xi}}$ with $Q_{\boldsymbol{\xi}} + Q_{\boldsymbol{\xi}}^T = B_{\boldsymbol{\xi}}$, to surface quadratures on the boundary $\Gamma = \partial \Omega$. 
We restrict the discussion to diagonal boundary operators supported on the surface nodes. 
 
\begin{definition}[Mimetic boundary operators]\label{def:mimeticB} 
	Let $B_{\boldsymbol{\xi}}  \in \R^{N \times N}$ be a boundary operator on $\Gamma = \partial \Omega$ with 
	\begin{equation}\label{eq:boundary_matrix2}
		\mathbf{f}^T B_{\boldsymbol{\xi}} \mathbf{g} = \oint_{\Gamma} f g ( \boldsymbol{\xi} \cdot \boldsymbol{n} ) \intd s, \quad \forall f,g \in \mathcal{F}, 
	\end{equation} 
	where $( \boldsymbol{\xi} \cdot \boldsymbol{n} )$ is the inner product of the directional vector $\boldsymbol{\xi}$ and the outward pointing unit normal $\boldsymbol{n}$. 
	Assume that $B_{\boldsymbol{\xi}}$ is defined on the node set $S = \{\mathbf{x}_n\}_{n=1}^N \subset \Omega$. 
	We call $B_{\boldsymbol{\xi}}$ \emph{mimetic} if  
	\begin{enumerate}
	
		\item[(i)] 
		$B_{\boldsymbol{\xi}}$ is diagonal, i.e., $B_{\boldsymbol{\xi}} = \diag(b_1,\dots,b_N)$;
	
		\item[(ii)] 
		 $B_{\boldsymbol{\xi}}$ only acts on the boundary nodes, i.e., $b_n = 0$ if $\mathbf{x}_n \not\in \Gamma$ for $n=1,\dots,N$;
		
		\item[(iii)] 
		The nonzero entries of $B_{\boldsymbol{\xi}}$ are of the form $b_n = v_n ( \boldsymbol{\xi} \cdot \boldsymbol{n}(\mathbf{x}_n) )$, where $v_n$ is a positive weight.  
		
	\end{enumerate}
\end{definition} 

\begin{remark} 
	The mimetic boundary operators in \cref{def:mimeticB} incorporate all geometric and direction information necessary for performing the IBP procedure, which makes them closely connected to the encapsulated boundary operators used in	\cite{lundquist2018hybrid,aalund2019encapsulated,lundquist2022multi}. 
\end{remark}

We will see in a moment that \cref{def:mimeticB} allows us to identify the boundary matrix $B_{\boldsymbol{\xi}}$ with a positive and $\mathcal{F}^2$-exact surface quadrature. 

\begin{theorem}\label{thm:mimetic_B}
	Let $\mathcal{F} \subset C(\Omega)$, $\Omega$ be an open and bounded domain with closed boundary $\Gamma = \partial \Omega$, and $S = \{ \mathbf{x}_n \}_{n=1}^N$ be a node set on $\Omega$. 
	There exists a mimetic boundary operator $B_{\boldsymbol{\xi}}$ satisfying \cref{eq:boundary_matrix2} if and only if there exists a positive and $\mathcal{F}^2$-exact surface quadrature $I^{(\Gamma)}_{Y,V}$ with points $Y \subset S \cap \Gamma$. 
	Moreover, the non-zero entries of $B_{\boldsymbol{\xi}}$ correspond to the products of the surface quadrature weights with $\boldsymbol{\xi} \cdot \boldsymbol{n}$ at the corresponding surface point. 
\end{theorem}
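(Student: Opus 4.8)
The plan is to treat both implications as two readings of a single bookkeeping identity. For a diagonal matrix $B_{\boldsymbol{\xi}} = \diag(b_1,\dots,b_N)$ and any $f, g \in \mathcal{F}$, one has
\[
\mathbf{f}^T B_{\boldsymbol{\xi}} \mathbf{g} = \sum_{n=1}^N b_n \, f(\mathbf{x}_n) g(\mathbf{x}_n),
\]
which is a weighted point sum evaluated on the product $fg \in \mathcal{F}^2$. First I would observe that, because the right-hand side of \cref{eq:boundary_matrix2} equals $\oint_\Gamma (fg)(\boldsymbol{\xi}\cdot\boldsymbol{n})\intd s = I^{(\Gamma)}[fg]$, condition \cref{eq:boundary_matrix2} literally says that this point sum reproduces the surface integral of $fg$ against $\boldsymbol{\xi}\cdot\boldsymbol{n}$. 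The remainder of the argument then only converts diagonal entries $b_n$ into quadrature weights $v_n$ and back.

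For the direction ``$\Rightarrow$'' I would assume a mimetic $B_{\boldsymbol{\xi}}$ satisfying \cref{eq:boundary_matrix2}. By (ii) of \cref{def:mimeticB} the sum above runs over boundary nodes only, and by (iii) each nonzero $b_n$ factors as $v_n(\boldsymbol{\xi}\cdot\boldsymbol{n}(\mathbf{x}_n))$ with $v_n > 0$. I would then define a surface quadrature on $Y = \{\mathbf{x}_n \in S\cap\Gamma : b_n \neq 0\} \subset S \cap \Gamma$ with weights $v_n = b_n / (\boldsymbol{\xi}\cdot\boldsymbol{n}(\mathbf{x}_n)) > 0$; any boundary node where $\boldsymbol{\xi}\cdot\boldsymbol{n}$ vanishes contributes zero to both the point sum and the quadrature, so it may be omitted. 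By construction $I^{(\Gamma)}_{Y,V}[fg] = \mathbf{f}^T B_{\boldsymbol{\xi}} \mathbf{g}$, and \cref{eq:boundary_matrix2} rewrites the latter as $I^{(\Gamma)}[fg]$, so the quadrature is exact on every product $fg$; since the products generate $\mathcal{F}^2$ and both $I^{(\Gamma)}_{Y,V}$ and $I^{(\Gamma)}$ are linear, $\mathcal{F}^2$-exactness follows, and positivity has already been noted.

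For ``$\Leftarrow$'' I would reverse the construction. Starting from a positive, $\mathcal{F}^2$-exact quadrature $I^{(\Gamma)}_{Y,V}$ with $Y = \{\mathbf{y}_m\}_{m=1}^M \subset S\cap\Gamma$ and $v_m > 0$, I set $B_{\boldsymbol{\xi}} = \diag(b_1,\dots,b_N)$ with $b_n = v_m(\boldsymbol{\xi}\cdot\boldsymbol{n}(\mathbf{x}_n))$ whenever $\mathbf{x}_n = \mathbf{y}_m$ and $b_n = 0$ otherwise. Properties (i)--(iii) of \cref{def:mimeticB} are then immediate, since $B_{\boldsymbol{\xi}}$ is diagonal, supported on $Y \subset \Gamma$, and has nonzero entries of the prescribed form with positive weights. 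Feeding this $B_{\boldsymbol{\xi}}$ into the identity and invoking $\mathcal{F}^2$-exactness (using $fg \in \mathcal{F}^2$) yields $\mathbf{f}^T B_{\boldsymbol{\xi}} \mathbf{g} = I^{(\Gamma)}_{Y,V}[fg] = I^{(\Gamma)}[fg] = \oint_\Gamma fg(\boldsymbol{\xi}\cdot\boldsymbol{n})\intd s$ for all $f, g \in \mathcal{F}$, which is exactly \cref{eq:boundary_matrix2}. The ``moreover'' assertion is just the relation $b_n = v_m(\boldsymbol{\xi}\cdot\boldsymbol{n})$ read off the construction.

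I do not expect a genuine analytic difficulty: the statement is essentially a dictionary between diagonal bilinear forms and weighted boundary sums. The step requiring the most care is the transition between the two notions of exactness---\cref{eq:boundary_matrix2} only asserts reproduction of products $fg$, while $\mathcal{F}^2$-exactness is phrased over the full span $\mathcal{F}^2$---which is closed by the bilinearity of $\mathbf{f}^T B_{\boldsymbol{\xi}} \mathbf{g}$ in $(f,g)$ together with the linearity of the two integral functionals. A secondary point is the consistent treatment of interior nodes and of boundary nodes at which $\boldsymbol{\xi}\cdot\boldsymbol{n} = 0$, which is why I index $Y$ by the support of $B_{\boldsymbol{\xi}}$ rather than by all of $S\cap\Gamma$.
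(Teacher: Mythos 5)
Your proof is correct and takes essentially the same route as the paper's: the paper constructs $B_{\boldsymbol{\xi}} = \diag(b_1,\dots,b_N)$ with $b_n = v_m(\boldsymbol{\xi}\cdot\boldsymbol{n}(\mathbf{x}_n))$ from the quadrature exactly as in your ``$\Leftarrow$'' direction, and dispatches the converse with ``similar arguments,'' which your ``$\Rightarrow$'' direction simply spells out. The two details you make explicit---discarding boundary nodes where $\boldsymbol{\xi}\cdot\boldsymbol{n}(\mathbf{x}_n)=0$ (harmless, since they contribute zero to both sides), and extending exactness from the products $fg$ to all of $\mathcal{F}^2$ by bilinearity and linearity---are left implicit in the paper but are handled correctly here.
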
 

\begin{proof} 
	Denote the surface quadrature points by $Y = \{ \mathbf{y}_m \}_{m=1}^M$ and the weights by $V = \{ v_m \}_{m=1}^M$. 
	The proof consists of two parts. 
	We first show that the existence of a positive and $\mathcal{F}^2$-exact surface quadrature implies the existence of a mimetic boundary operator. 
	To this end, we construct as a diagonal matrix $B_{\boldsymbol{\xi}} = \diag( b_1, \dots, b_N )$ as 
	\begin{equation} 
		b_n = 
		\begin{cases} 
			v_m ( \boldsymbol{\xi} \cdot \boldsymbol{n}(\mathbf{x}_n) ) & \text{if } \mathbf{x}_n = \mathbf{y}_m, \\ 
			0 & \text{otherwise.} 
		\end{cases} 
	\end{equation} 
	By construction, $B_{\boldsymbol{\xi}}$ satisfies (i), (ii), and (iii) in \cref{def:mimeticB}. 
	It remains to show that \cref{eq:boundary_matrix2} holds. 
	To this end, we can rewrite the left-hand side of \cref{eq:boundary_matrix2} as 
	\begin{equation}\label{eq:boundary_matrix_proof1} 
		\mathbf{f}^T B_{\boldsymbol{\xi}} \mathbf{g} 
			= \sum_{m=1}^M v_m ( \boldsymbol{\xi} \cdot \boldsymbol{n}(\mathbf{y}_m) ) f(\mathbf{y}_m) g(\mathbf{y}_m) 
			= I^{(\Gamma)}_{Y,V}[fg], 
	\end{equation}
	using our above construction of $B_{\boldsymbol{\xi}}$. 
	Now let $f,g \in \mathcal{F}$. 
	Then $fg \in \mathcal{F}^2$, and since $I^{(\Gamma)}_{Y,V}$ is $\mathcal{F}^2$-exact, we have $I^{(\Gamma)}_{Y,V}[fg] = I^{(\Gamma)}[fg]$. 
	Substituting this into \cref{eq:boundary_matrix_proof1} yields 
	\begin{equation}
		\mathbf{f}^T B_{\boldsymbol{\xi}} \mathbf{g} 
			= I^{(\Gamma)}[fg] 
			= \oint_{\Gamma} f g ( \boldsymbol{\xi} \cdot \boldsymbol{n} ) \intd s, 
	\end{equation} 
	which shows that $B_{\boldsymbol{\xi}}$ satisfies \cref{eq:boundary_matrix2} and therefore is a mimetic boundary operator. 
	In the second part of the proof, similar arguments as in the first part can be used to show that the existence of a mimetic boundary operator $B_{\boldsymbol{\xi}} = \diag( b_1, \dots, b_N )$ implies the existence of a positive and $\mathcal{F}^2$-exact surface quadrature $I^{(\Gamma)}_{Y,V}$. 
\end{proof} 
\section{Construction of MFSBP operators} 
\label{sec:construction} 

Let $\Omega \subset \R^d$ be an open and bounded reference domain, let $\boldsymbol{\xi} \in \R^d$, and let ${\mathcal{F} \subset C^1(\Omega)}$ be the function space for which we want a diagonal-norm MFSBP operator $D_{\boldsymbol{\xi}}$ approximating $\partial_{\boldsymbol{\xi}}$. 
The construction then proceeds as follows: 
\begin{enumerate} 

	\item[(S1)] 
	Select $N_s$ and $N_i$ nodes on the surface and interior of $\Omega$, respectively. 
	The node set $S$ is the union of the surface and interior nodes and contains a total number of $N = N_s + N_i$ nodes. 

	\item[(S2)] 
	Find a positive and $\mathcal{F}^2$-exact surface quadrature supported on the $N_s$ surface points.  
	Use the surface quadrature to construct a mimetic boundary matrix $B_{\boldsymbol{\xi}}$. 

	\item[(S3)]  
	Find a positive and $\partial_{\boldsymbol{\xi}} ( \mathcal{F}^2 )$-exact volume quadrature supported on the $N$ surface and interior points.  
	Use the volume quadrature to construct a diagonal-norm matrix $P$. 
	
	\item[(S4)]  
	Find a matrix $Q_{\boldsymbol{\xi}}$ that satisfies \cref{def:FSBP}. 
	Then construct an MFSBP operator as ${D_{\boldsymbol{\xi}} = P^{-1} Q_{\boldsymbol{\xi}}}$. 
	
\end{enumerate}

We address details on (S2), (S3), and (S4) in \cref{sub:constr_B,sub:constr_P,sub:constr_Q}.
Furthermore, we comment on (S1), particularly how the nodes can be distributed, in \cref{sub:constr_nodes}.

\subsection{The boundary matrix $B_{\boldsymbol{\xi}}$}
\label{sub:constr_B}

We first describe how one can construct a mimetic boundary matrix $B_x$, satisfying \cref{def:mimeticB}. 
Given is a reference element $\Omega$ with piecewise smooth boundary $\Gamma$, where we denote the smooth parts by $\Gamma_1,\dots,\Gamma_J$. 
If we find mimetic boundary matrices $B_j$ on $\Gamma_j$, $j=1,\dots,J$, with
\begin{equation}\label{eq:constr_surface_quad1}
	\mathbf{f}^T B_j \mathbf{g} 
		= \int_{\Gamma_j} f g ( \boldsymbol{\xi} \cdot \boldsymbol{n} ) \intd s, \quad \forall f,g \in \mathcal{F},
\end{equation}
then we get the desired mimetic boundary matrix $B_{\boldsymbol{\xi}}$ as $B_{\boldsymbol{\xi}} = \sum_{j=1}^J B_j$.
Following \cref{thm:mimetic_B}, we can find such matrices $B_j$ in two steps: 
(i) Construct a positive and $\mathcal{F}^2$-exact surface quadrature, 
\begin{equation}
	I_{X^{(j)},V^{(j)}}^{(\Gamma_j)}[fg] 
		= \sum_{m=1}^{M_j} v^{(j)}_m ( \boldsymbol{\xi} \cdot \boldsymbol{n}(\mathbf{x}^{(j)}_m) ) f(\mathbf{x}^{(j)}_m) g(\mathbf{x}^{(j)}_m)
		\approx \oint_{\Gamma_j} f g ( \boldsymbol{\xi} \cdot \boldsymbol{n} ) \intd s 
		= I^{(\Gamma_j)}[fg].
\end{equation}
(ii) Use the products $v^{(j)}_m ( \boldsymbol{\xi} \cdot \boldsymbol{n}(\mathbf{x}^{(j)}_m) )$ as the non-zero diagonal entries of $B_j$ that correspond to the surface nodes on $\Gamma_j$. 
That is, we get $B_j = \diag( (b_j)_1, \dots, (b_j)_N )$ as 
\begin{equation} 
	(b_j)_n = 
	\begin{cases} 
		v^{(j)}_m ( \boldsymbol{\xi} \cdot \boldsymbol{n}(\mathbf{x}^{(j)}_m) ) & \text{if } \mathbf{x}_n = \mathbf{x}^{(j)}_m \in \Gamma_j, \\ 
		0 & \text{otherwise.} 
	\end{cases} 
\end{equation} 
We were able to find positive and $\mathcal{F}^2$-exact surface quadratures using equidistant points on $\Gamma_j$ using the least-squares approach \cite{glaubitz2020stableDG,glaubitz2020stableQF,glaubitz2022constructing} and the Projection Onto Convex Sets (POCS) algorithm \cite{Neumann1950Functional,GUBIN1967The,escalante2011alternating} if sufficiently many grid points were used. 
Given the two overlapping closed convex sets $C$ and $D$, the POCS algorithm finds a point $x \in C \cap D$ by alternatingly projecting onto the sets $C$ and $D$. 
To find a positive and $\mathcal{F}^2$-exact surface quadrature, the sets $C$ and $D$ encode the positivity of the weights and exactness conditions of an $\mathcal{F}^2$-exact surface quadrature, respectively. 
\cref{fig:POCS} illustrates the POCS algorithm for the case of $C$ and $D$ being (affine) linear spaces. 
We chose POCS because it is remarkably versatile. 
POCS can be employed not only for solving the surface quadrature but also for the volume quadrature (necessitating the enforcement of a lower bound constraint) and the anti-symmetric matrix $Q_A$ (requiring an anti-symmetric constraint). 
Switching between these diverse constraints demands minimal alterations to the POCS algorithm and its implementation. 
Concurrently, we noted that the computational costs of the POCS algorithm were competitive when compared to alternative methods. 
See \cref{app:POCS_surface} for more details on the POCS algorithm. 

\begin{figure}[tb]
	\centering
	\includegraphics[width=0.5\textwidth]{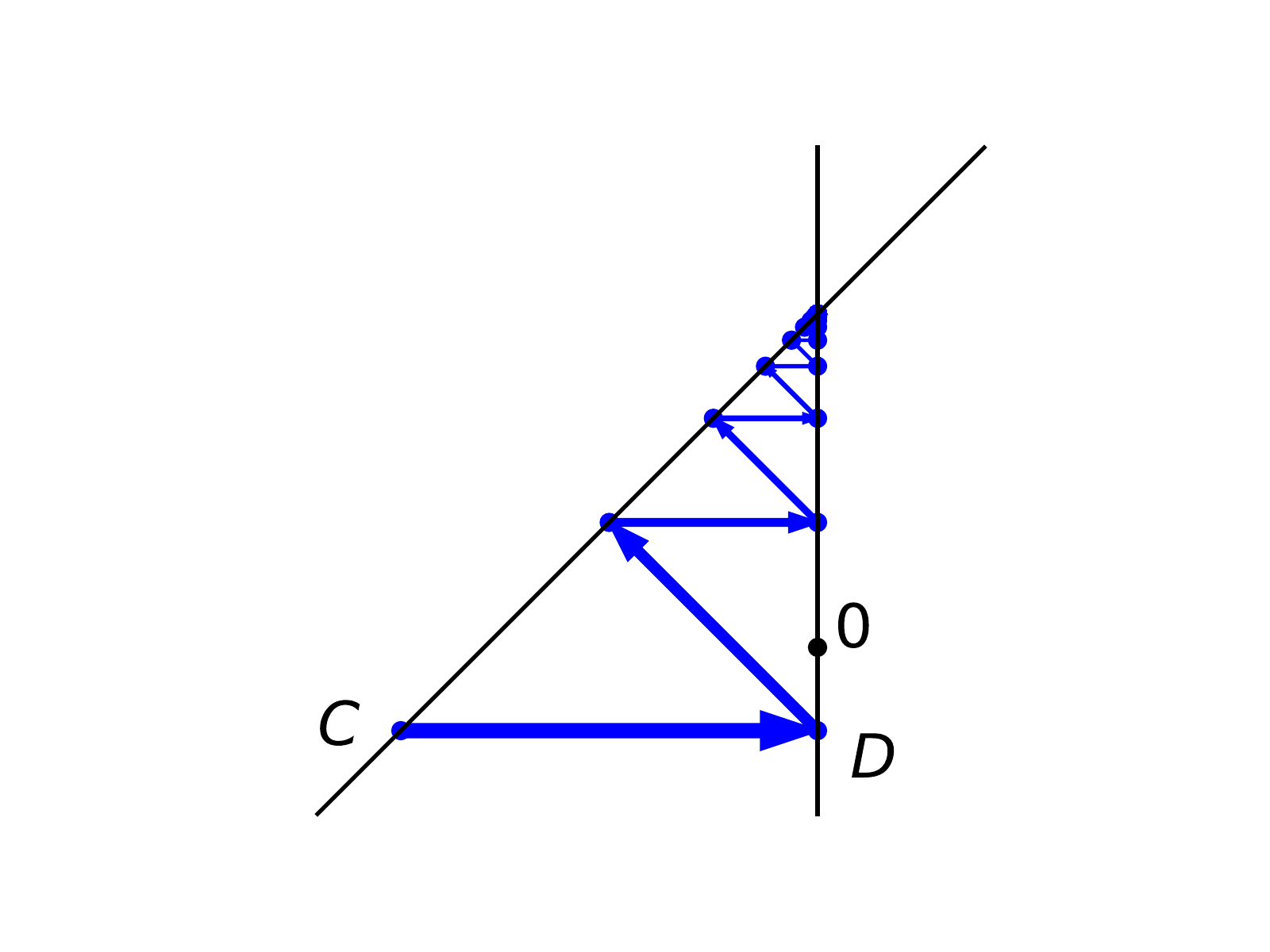} 
	\caption{ 
	Illustration of the POCS algorithm. 
	The two convex sets $C$, and $D$ correspond to (affine) linear subspaces. 
	 Starting from an initial point, the POCS algorithm produces a sequence of points (illustrated by the blue dots) that converges to a point in the intersection of $C$ and $D$. 
	}
	\label{fig:POCS}
\end{figure}

\begin{remark}\label{rem:corners}
	It is a non-trivial task in hyperbolic problems to properly treat element corners, where the normal vector becomes undefined. 
	To avoid that difficulty (which exists for all numerical methods), we do not place any points on the corners of the domain $\Omega$. 
\end{remark}

\begin{remark}\label{rem:conservation} 
	Inter-element conservation in a multi-element/block method can most easily be ensured if the quadratures at the surface that connect two elements (i) have their points at the same locations and (ii) use the same weights $v_m^{(j)}$ at these points. 
	Then, the inter-element contributions of both elements cancel out, and mass is neither created nor destroyed between elements. 
	Otherwise, special projection and coupling methods have to be utilized \cite{ranocha2017extended,chan2018discretely,chan2019skew}. 
\end{remark}

\begin{remark}[Computational costs]
	Most of the computational expense in finding the boundary matrix is dedicated to the numerical computation of the basis functions' moments (their integrals) up to machine precision, which is needed for the exactness conditions. 
	Relative to this, executing the POCS algorithm is an inexpensive process. 
	When executed on a single core, the total time to construct any of the two-dimensional MFSBP operators considered in the later numerical tests (see \cref{sec:examples} for more details) on the reference element was on the order of a second. 
\end{remark}

\subsection{The diagonal norm matrix $P$}
\label{sub:constr_P}

Assume that we have a positive and $\partial_{\boldsymbol{\xi}} (\mathcal{F}^2)$-exact volume quadrature $I_{X,W}$ on $\Omega$ with nodes $X$ and weights $W = \{w_n\}_{n=1}^N$. 
Following \cref{thm:connection}, we can construct a diagonal-norm MFSBP operator $D_{\boldsymbol{\xi}} = P^{-1} Q_{\boldsymbol{\xi}}$ on the nodes $X$ with a diagonal norm matrix 
\begin{equation}
	P = \diag(w_1,\dots,w_N). 
\end{equation} 
In general, we found such a positive and $\partial_{\boldsymbol{\xi}} (\mathcal{F}^2)$-exact volume quadrature using the $N_s$ surface points from constructing the mimetic boundary matrix $B_{x_i}$ (see \cref{sub:constr_B}), adding $N_i$ points in the interior of $\Omega$, and then using the POCS algorithm. 
See \cref{app:POCS_volume} for more details. 

\begin{remark} 
	Consider a discretized problem that necessitates using MFSBP operators in different directions. 
	For instance, in the $x$-direction, we have $D_x = P^{-1} Q_x$, while in the $y$-direction, we have $D_y = P^{-1} Q_y$. 
	The norm operator $P$ must be the same for both MFSBP operators to ensure energy stability, which we demonstrate in \cref{sub:linear_energy}.
	In such scenarios, $P$ must correspond to a volume quadrature that is positive and concurrently exact for both $\partial_x (\mathcal{F}^2)$ and $\partial_y (\mathcal{F}^2)$.
	The two-dimensional function spaces that we examine later in \cref{sec:examples,sec:num_tests} satisfy the condition $\partial_x (\mathcal{F}^2) = \partial_y (\mathcal{F}^2)$. 
	This congruence significantly simplifies the construction procedure. 
	On the other hand, if $\partial_x \mathcal{F}^2 \neq \partial_y \mathcal{F}^2$, then the union of $\partial_x \mathcal{F}^2$ and $\partial_y \mathcal{F}^2$ encompasses a larger function space than each individual one and a larger number of quadrature points might be necessary.
\end{remark}

\subsection{The matrix $Q_{\boldsymbol{\xi}}$ and the MFSBP operator $D_{\boldsymbol{\xi}} = P^{-1} Q_{\boldsymbol{\xi}}$}
\label{sub:constr_Q}

In the last step, we construct the matrix $Q_{\boldsymbol{\xi}}$ from which we get the desired MFSBP operator as $D_{\boldsymbol{\xi}} = P^{-1} Q_{\boldsymbol{\xi}}$. 
To this end, $Q_{\boldsymbol{\xi}}$ is decomposed into its symmetric and anti-symmetric parts, $Q_S$ and $Q_A$. 
Condition (iii) in \cref{def:FSBP} then yields 
\begin{equation}\label{eq:Q_split}  
	Q_{\boldsymbol{\xi}} = Q_A + \frac{1}{2} B_{\boldsymbol{\xi}}.
\end{equation} 
Furthermore, the accuracy condition (i) in \cref{def:FSBP} implies 
\begin{equation}\label{eq:QA_exactness} 
	Q_A V = P V_{\boldsymbol{\xi}} - \frac{1}{2} B_{\boldsymbol{\xi}} V.
\end{equation}
Here, $V_{\boldsymbol{\xi}} = [\mathbf{(f_1)_{\boldsymbol{\xi}}},\dots,\mathbf{(f_K)_{\boldsymbol{\xi}}}]$ is the Vandermonde matrix for the derivatives in $\boldsymbol{\xi}$-direction of the basis functions $f_1,\dots,f_K$ of the function space $\mathcal{F} \subset C^1(\Omega)$. 
It remains to construct an anti-symmetric matrix $Q_A$ that satisfies \cref{eq:QA_exactness}.  
Once such $Q_A$ is found, we get $Q$ by \cref{eq:Q_split}. 
Finding an anti-symmetric solution $Q_A$ of \cref{eq:QA_exactness} can be formulated as a classical Procrustes problem \cite{higham1988symmetric,andersson1997constrained,gower2004procrustes}, with many available solution procedures. 
For instance,  \cite{hicken2016multidimensional,del2018simultaneous,glaubitz2022summation} used a least-squares approach to determine $Q_A$. 
In our implementation, we used the POCS algorithm to find an anti-symmetric $Q_A$ that satisfies \cref{eq:QA_exactness} for efficiency reasons. 
We refer to \cref{app:POCS_QA} for more details on how the POCS algorithm can be used to find $Q_A$.

\subsection{The surface and interior nodes}
\label{sub:constr_nodes} 

The only restriction on the surface points is that a positive and $\mathcal{F}^2$-exact surface quadrature has to exist for them, while the only limitation on the inner points is that a positive and $\partial_{\boldsymbol{\xi}} (\mathcal{F}^2)$-exact volume quadrature has to exist for the union of the inner and surface points. 
To this end, it was proved in \cite{glaubitz2021stableCFs,glaubitz2022constructing} that a positive and $\mathcal{G}$-exact quadrature can be found whenever (i) $\mathcal{G}$ includes constants and (ii) sufficiently many points are used, where $\mathcal{G}$ is $\mathcal{F}^2$ and $\partial_{\boldsymbol{\xi}} (\mathcal{F}^2)$ in the case of the surface and volume quadrature, respectively. 
It was numerically observed in \cite{glaubitz2020stableQF,glaubitz2021stableCFs,glaubitz2022constructing} that the number of points has to be proportional to the squared dimension of $\mathcal{G}$. 
Finally, the proof presented in \cite{glaubitz2021stableCFs,glaubitz2022constructing} allows for any class of equidistributed points \cite{kuipers2012uniform}, including equidistant points and low discrepancy points used in quasi-Monte Carlo methods \cite{caflisch1998monte,dick2013high}, e.g., Halton points \cite{halton1960efficiency}. 
In our implementation, we used equidistant surface points and inner points that are Halton-distributed. 
More precisely, we started by using a single surface point on each piecewise smooth boundary part and successively increased this number by one until we found a positive and $\mathcal{F}^2$-exact surface quadrature. 
Afterward, starting from using only the surface points (no inner points), we successively increased the number of inner (Halton) points by one until we found a positive and $\partial_{\boldsymbol{\xi}} \mathcal{F}^2$-exact volume quadrature. 

\begin{remark}[Halton points]
	The main idea behind Halton points is to generate uniformly distributed and well-spaced points throughout the unit cube. 
	This is achieved by using different prime base numbers for each dimension and creating a sequence of numbers in each dimension that avoids large gaps or clusters. 
	To generate Halton points within a bounded domain $\Omega$, we initially create these points within a cube that encloses $\Omega$, subsequently utilizing only the subset of points that lie on $\Omega$. 
	For a more comprehensive understanding of Halton points, see \cite{halton1960efficiency,kuipers2012uniform}.
	We stress that using Halton points, while beneficial, is not a prerequisite for our method of constructing MFSBP operators. 
\end{remark} 
\section{Mimetic properties of MFSBP operators} 
\label{sec:properties}

Here, we demonstrate that (i) it is necessary to include constants in the function space $\mathcal{F}$ for conservation and that (ii) most other mimetic results for polynomial MSBP operators \cite{hicken2016multidimensional,del2018simultaneous} also hold for our MFSBP operators. 
For simplicity, we illustrate this for the scalar linear advection equation in two dimensions.

\subsection{The linear advection equation}
\label{sub:linear}

The initial boundary value problem (IBVP) for the scalar linear advection equation with constant coefficients is   
\begin{equation}\label{eq:linear}
\begin{aligned}
	\partial_t u+ a \partial_x u+ b \partial_y u & = 0, \quad && \forall (x,y) \in \Omega, \ t>0, \\ 
	u(0,x,y) & = u_0(x,y), \quad && \forall (x,y) \in \Omega, \\
	u(t,x,y) & = g(t,x,y), \quad && \forall (x,y) \in \Gamma_{-}, \ t \geq 0, 
\end{aligned}
\end{equation}
where $\Gamma_{-} = \{ \, (x,y) \in \partial\Omega \mid a n_x+ b n_y<0 \, \}$ is the inflow part of the boundary of $\Omega$ and $\Gamma_{+} = \Gamma \setminus \Gamma_{-}$ is the outflow part. 
Here, $n_x$ and $n_y$ are the first and second component of the outward pointing unit normal $\boldsymbol{n} = [n_x,n_y]^T$ for the boundary $\Gamma$. 
Let $D_x=P^{-1} Q_x$ and $D_y=P^{-1} Q_y$ be MFSBP operators, approximating the partial derivatives in the canonical directions, $\partial_x$ and $\partial_y$, with mimetic boundary operators $B_x$ and $B_y$. 
The MFSBP-SAT semi-discretization of \cref{eq:linear} is formally given by 
\begin{equation}\label{eq:linear_discr}
	\mathbf{u}_t + a D_x \mathbf{u} + b D_y \mathbf{u} 
		= P^{-1} ( a B_x + b B_y ) ( \mathbf{u} - \mathbf{g} ),
\end{equation} 
Here, $\mathbf{u} = [u_1,\dots,u_N]^T$ are the nodal values of the numerical solution at the nodes $S = \{\mathbf{x}_n\}_{n=1}^N$ and $\mathbf{g} = [g_1,\dots,g_N]^T$ is the boundary data vector with 
\begin{equation}\label{eq:g}
	g_n = 
	\begin{cases} 
		g(t,\mathbf{x}_n) & \text{if } \mathbf{x}_n \in \Gamma_-, \\ 
		u_n & \text{if } \mathbf{x}_n \in \Gamma_+.
	\end{cases}
\end{equation}
Note that (ii) in \cref{def:mimeticB} ensures that $B_x$ and $B_y$ only act on surface entries corresponding to points on the boundary $\Gamma$.\footnote{ 
We can therefore assign arbitrary values to the entries in $\mathbf{g}$ that correspond to points in the interior of $\Omega$. 
}
The right-hand side of \cref{eq:linear_discr} is a \emph{simultaneous approximation term (SAT)} that weakly enforces the BC by ``forcing" the numerical solution toward the boundary data $g$ at the inflow boundary part. 
See \cite{svard2014review,fernandez2014review,del2018simultaneous} and references therein for more details on SATs.

\begin{remark} 
	In general, each SAT term on the right-hand side of \cref{eq:linear_discr} involves a free penalty parameter as, for instance, $\sigma a P^{-1} B_x ( \mathbf{u} - \mathbf{g} )$ instead of $a P^{-1} B_x ( \mathbf{u} - \mathbf{g} )$. 
	We have fixed the penalty parameter to $\sigma = 1$, which was demonstrated to minimize boundary data errors in \cite[Appendix A]{glaubitz2022summation} (also see \cite[page 11]{aalund2016provably}). 
\end{remark}

\subsection{Conservation} 
\label{sub:linear_cons} 

The exact solution of \cref{eq:linear} satisfies the following \emph{conservation} property:
\begin{equation}\label{eq:conservation}
	\frac{\d}{\d t} \int_{\Omega } u \intd \boldsymbol{x} 
		= - \left( \int_{\Gamma_+}  u (a n_x + b n_y) \intd s + \int_{\Gamma_-}  g (a n_x + b n_y) \intd s \right).
\end{equation}  
This means that the total amount of the quantity $u$ is neither created nor destroyed inside the domain and only changes due to the flux across the boundaries. 
We now address how conservation can be mimicked in the discrete case using MFSBP operators. 
The left-hand side of \cref{eq:conservation} is approximated by $\mathbf{1}^T P \mathbf{u}_t$. 
Substituting the MFSBP-SAT semi-discretization \cref{eq:linear_discr}, we get 
\begin{equation}\label{eq:cons_aux1}
	\mathbf{1}^T P\mathbf{u}_t 
		= - a \mathbf{1}^T P D_x \mathbf{u} - b \mathbf{1}^T P D_y \mathbf{u} +  \mathbf{1}^T ( a B_x + b B_y ) ( \mathbf{u} - \mathbf{g} ).
\end{equation}
The MFSBP properties transform \cref{eq:cons_aux1} to 
\begin{equation}\label{eq:cons_aux2}
	\mathbf{1}^T P\mathbf{u}_t 
		= a (D_x \mathbf{1})^T P \mathbf{u} + b (D_y \mathbf{1})^T P \mathbf{u} - \mathbf{1}^T \left(a B_x + b B_y \right) \mathbf{g}.
\end{equation} 
The last term on the right-hand side of \cref{eq:cons_aux2} approximates the right-hand side of \cref{eq:conservation}. 
However, \cref{eq:cons_aux2} also contains the additional volume terms $a(D_x \mathbf{1})^T P \mathbf{u}$ and $b(D_y \mathbf{1})^T P \mathbf{u}$, while no such terms are present in \cref{eq:conservation}. 
To avoid artificial construction or destruction of the quantity $u$, 
\begin{equation}\label{eq:restr_Dx_Dy}
	D_x \mathbf{1} = D_y \mathbf{1} = \mathbf{0}
\end{equation}
must hold. 
This is satisfied by construction for polynomial MSBP operators since non-zero constants are polynomials of degree zero. 
For MFSBP operators, we can ensure \cref{eq:restr_Dx_Dy} by requiring the MFSBP operators to be exact for constants, i.e., $1 \in \mathcal{F}$. 
In this case, \cref{eq:cons_aux2} becomes 
\begin{equation}\label{eq:cons_aux3}
	\mathbf{1}^T P\mathbf{u}_t 
		= - \mathbf{1}^T \left(a B_x + b B_y \right) \mathbf{g}, 
\end{equation} 
which is the discrete analog to \cref{eq:conservation}.

\subsection{Energy stability} 
\label{sub:linear_energy} 

The exact solution of \cref{eq:linear} is \emph{energy-bounded} since it satisfies 
\begin{equation}\label{eq:stability}
\begin{aligned}
	\frac{\rm d}{\rm d t} \norm{u}_{L_2}^2 
		= - \int_{\Gamma_+} \left( a n_x + b n_y \right) u^2  \intd s - \int_{\Gamma_-} \left( a n_x + b n_y\right) g^2 \intd s,  
\end{aligned} 
\end{equation} 
where $\Gamma_+$ and $\Gamma_-$ are the outflow and inflow boundary part, respectively. 
A similar relation to \cref{eq:stability} is desired in the discrete setting to establish discrete energy stability for the numerical solution. 
We observe that the left-hand side \cref{eq:stability} is approximated by $\frac{\d}{\d t} \mathbf{u}^T P \mathbf{u}$. 
Substituting the MFSBP-SAT semi-discretization \cref{eq:linear_discr}, we get 
\begin{equation}\label{eq:stab_aux1} 
	\frac{1}{2} \frac{\d}{\d t} \mathbf{u}^T P \mathbf{u} 
		= \mathbf{u}^T P\mathbf{u}_t 
		= - a \mathbf{u}^T P D_x \mathbf{u} - b \mathbf{u}^T P D_y \mathbf{u} + \mathbf{u}^T ( a B_x + b B_y ) ( \mathbf{u} - \mathbf{g} ).
\end{equation}
The MFSBP properties transform \cref{eq:stab_aux1} to 
\begin{equation}\label{eq:stab_aux2}
	\frac{1}{2} \frac{\d}{\d t} \mathbf{u}^T P \mathbf{u} 
		= a \mathbf{u}^T D_x^T P \mathbf{u} + b \mathbf{u}^T D_y^T P \mathbf{u} - \mathbf{u}^T ( a B_x + b B_y ) \mathbf{g}.
\end{equation} 
Since $\mathbf{u}^T D_x^T P \mathbf{u}$ and $\mathbf{u}^T D_y^T P \mathbf{u}$ are real numbers, they are equal to their transposes, and summing \cref{eq:stab_aux1,eq:stab_aux2} yields 
\begin{equation}\label{eq:stab_aux3}
	\frac{\d}{\d t} \mathbf{u}^T P \mathbf{u} 
		= \mathbf{u}^T ( a B_x + b B_y ) ( \mathbf{u} - 2 \mathbf{g} ).
\end{equation} 
Assume that $B_x$ and $B_y$ are mimetic boundary operators (see \cref{def:mimeticB}). 
Using (i), (ii), and (iii) in \cref{def:mimeticB}, the right-hand side of \cref{eq:stab_aux3} reduces to 
\begin{equation}\label{eq:stab_aux4}
	\frac{\d}{\d t} \mathbf{u}^T P \mathbf{u} 
		= \sum_{m=1}^M u_m \left[ (v_x)_m a n_x(\mathbf{x}_m) + (v_y)_m b n_y(\mathbf{x}_m) \right] \left( u_m - 2 g_m \right),
\end{equation}
where $(b_x)_m = (v_x)_m n_x(\mathbf{x}_m)$ and $(b_y)_m = (v_y)_m n_y(\mathbf{x}_m)$ are the non-zero diagonal elements of $B_x$ and $B_y$, respectively. 
Note that the sum in \cref{eq:stab_aux4} only includes the surface nodes $\{ \mathbf{x}_m \}_{m=1}^M \subset \Gamma$. 
Suppose that $B_x$ and $B_y$ are associated with the same positive and $\mathcal{F}^2$-exact surface quadrature, i.e.\ $v_{m} = (v_x)_m = (v_y)_m$. 
Moreover, for ease of notation, we denote 
\begin{equation} 
	\tau_m = v_m \left[ a n_x(\mathbf{x}_m) + b n_y(\mathbf{x}_m) \right].
\end{equation} 
Note that $\tau_m \geq 0$ if $\mathbf{x}_m \in \Gamma_+$ and $\tau_m \leq 0$ if $\mathbf{x}_m \in \Gamma_-$. 
We can therefore decompose the sum in \cref{eq:stab_aux5} into two parts, 
\begin{equation}\label{eq:stab_aux5}
	\frac{\d}{\d t} \mathbf{u}^T P \mathbf{u} 
		= \sum_{\mathbf{x}_m \in \Gamma_+} \tau_m u_m \left( u_m - 2 g_m \right) 
		+ \sum_{\mathbf{x}_m \in \Gamma_-} \tau_m u_m \left( u_m - 2 g_m \right),
\end{equation}
where the first contains the surface points on the outflow boundary part ($\mathbf{x}_m \in \Gamma_+$) and the second contains the surface points on the inflow boundary part ($\mathbf{x}_m \in \Gamma_{-}$). 
By construction of $\mathbf{g}$ in \cref{eq:g}, we have $g_m = u_m$ in the first sum and $g_m = g(t,\mathbf{x_m})$ in the second sum. 
Hence, \cref{eq:stab_aux5} becomes  
\begin{equation}\label{eq:stab_aux6} 
	\frac{\d}{\d t} \mathbf{u}^T P \mathbf{u} 
		= - \sum_{\mathbf{x}_m \in \Gamma_+} \tau_m u_m^2 
		+ \sum_{\mathbf{x}_m \in \Gamma_-} \tau_m u_m \left( u_m - 2 g(t,\mathbf{x_m}) \right).
\end{equation} 
Note that $u_m ( u_m - 2 g(t,\mathbf{x_m}) ) = [ u_m - g(t,\mathbf{x_m}) ]^2 - g(t,\mathbf{x_m})^2$, and we can therefore rewrite \cref{eq:stab_aux6} as 
\begin{equation}\label{eq:stab_aux7} 
\begin{aligned}
	\frac{\d}{\d t} \mathbf{u}^T P \mathbf{u} 
		& = - \sum_{\mathbf{x}_m \in \Gamma_+} \tau_m u_m^2 
			- \sum_{\mathbf{x}_m \in \Gamma_-} \tau_m g(t,\mathbf{x_m})^2 
			+ \sum_{\mathbf{x}_m \in \Gamma_-} \tau_m \left[ u_m - g(t,\mathbf{x_m}) \right]^2. 
\end{aligned}
\end{equation} 
Recall that $\tau_m = v_m \left[ a n_x(\mathbf{x}_m) + b n_y(\mathbf{x}_m) \right]$. 
Hence, the first two sums on the right-hand side of \cref{eq:stab_aux7} are the discrete analog to \cref{eq:stability}. 
The additional third sum in \cref{eq:stab_aux7} corresponds to a non-negative damping term that stabilizes the semi-discretization and vanishes with increasing accuracy.   
\section{Examples of MFSBP operators} 
\label{sec:examples}

We now exemplify the construction of MFSBP operators on a triangular and circular reference element $\Omega \subset \R^2$. 
In both cases, we consider the non-polynomial and polynomial function spaces 
\begin{equation}\label{eq:testFspace}
\begin{aligned} 
	\mathcal{F}_1 & = \lspan\left\{ 1, x, y, x^2, xy, y^2, x^3, x^2y, xy^2, y^3 \right\}, \\
	\mathcal{F}_2 & = \lspan \left\{ 1, x, y, \sin(\omega (x + y)), \cos(\omega (x + y)) \right\}, 
\end{aligned}
\end{equation} 
where $\omega > 0$ is a parameter that remains to be determined. 
Although the polynomial function space $\mathcal{F}_1$ has a higher dimension than the trigonometric function space $\mathcal{F}_2$, we found the trigonometric one to yield significantly more accurate numerical solutions for the specific tests carried out in \cref{sub:tests_linear} and \cref{sub:tests_steady}. 
Note that for both spaces, $\partial_x (\mathcal{F}_i^2) = \partial_y (\mathcal{F}_i^2)$, $i=1,2$. 
Hence, it suffices to find a positive and $\partial_x (\mathcal{F}_i^2)$-exact volume quadrature, for which we use equidistributed Halton points. 
Although using Halton points ensures the existence of the desired volume quadratures (see \cite{glaubitz2022constructing}), they are not necessarily optimal in the sense that the number of quadrature points can be notably larger than the dimension of the function space $\partial_x (\mathcal{F}_i^2)$. 
Constructing optimal (so-called Gaussian) quadratures for general multi-dimensional function spaces is an open problem. 
Our ambition is to further investigate the optimization and efficiency of our MFSBP operators. 
However, such efforts need to be tailored to specific function spaces and will be carried out in future work. 

\begin{remark}
	In the upcoming numerical tests, we deliberately choose examples that favor the non-polynomial operators in order to exemplify the flexibility of using other function spaces as in the MFSBP operators.
\end{remark}

\subsection{MFSBP operators on triangles} 
\label{sub:examples_triangle}

We start by exemplifying the construction of MFSBP operators on the triangular domain in \cref{fig:triangle}, given by 
\begin{equation}\label{eq:triangular_domain}
	\Omega = \left\{ \, (x,y) \in \R^2 \mid 0 \leq x \leq 1, \, 0 \leq y \leq 1-x \, \right\}. 
\end{equation}   
The boundary of the triangle $\Omega$ consists of three linear parts with outward pointing unit normal vectors $[0, -1]^T$, $(1/\sqrt{2}) [1,1]^T$, and $[-1,0]^T$. 
We need these normal vectors in combination with a positive and $\mathcal{F}_i^2$-exact surface quadrature to construct a mimetic boundary matrix $B_x$. 
The surface quadrature should be the same on all linear boundary parts, up to a length-dependent scaling factor, to ensure inter-element conservation in a multi-element/block setting. 
For the triangle, we found a positive and $\mathcal{F}_1^2$-exact surface quadrature using $8$ equidistant points on each linear part. 
(No nodes were placed on the corners of the triangle to avoid undefined normal vectors.) 
The same number of surface points was necessary to find a positive and $\mathcal{F}_2^2$-exact surface quadrature. 
To find a positive and $\partial_x( \mathcal{F}_1^2 )$-exact volume quadrature for the polynomial MSBP operator, we had to add another $21$ Halton-distributed points in the interior of $\Omega$, using a total number of $45$ points. 
To find a positive and $\partial_x( \mathcal{F}_2^2 )$-exact volume quadrature for the trigonometric MFSBP operator, on the other hand, we had to add $12$ Halton-distributed points in the interior of $\Omega$, using a total number of $32$ points. 
In both cases, we could not find desired volume quadratures using fewer Halton-distributed points in the interior of $\Omega$. 
We then found $\mathcal{F}_1$- and $\mathcal{F}_2$-exact MFSBP operators, respectively, following the construction procedure in \cref{sec:construction}. 
The point sets used to find these operators are illustrated in \cref{fig:triangle_trig,fig:triangle_poly}, respectively. 
Although the polynomial MSBP operator has a higher dimension and uses more points than the trigonometric MFSBP operator, we found the trigonometric $\mathcal{F}_2$-exact MFSBP operator to yield significantly more accurate results for our numerical tests in \cref{sub:tests_linear}. 
To allow for a fair comparison between the polynomial and non-polynomial function space, we did not optimize the point distribution for any of the function spaces. 
As stated earlier, optimizing the point distribution, e.g., to get minimal quadratures, needs to be tailored to specific function spaces and will be addressed in future work. 

\begin{figure}[tb]
	\centering 
	\begin{subfigure}{0.49\textwidth}
		\includegraphics[width=\textwidth]{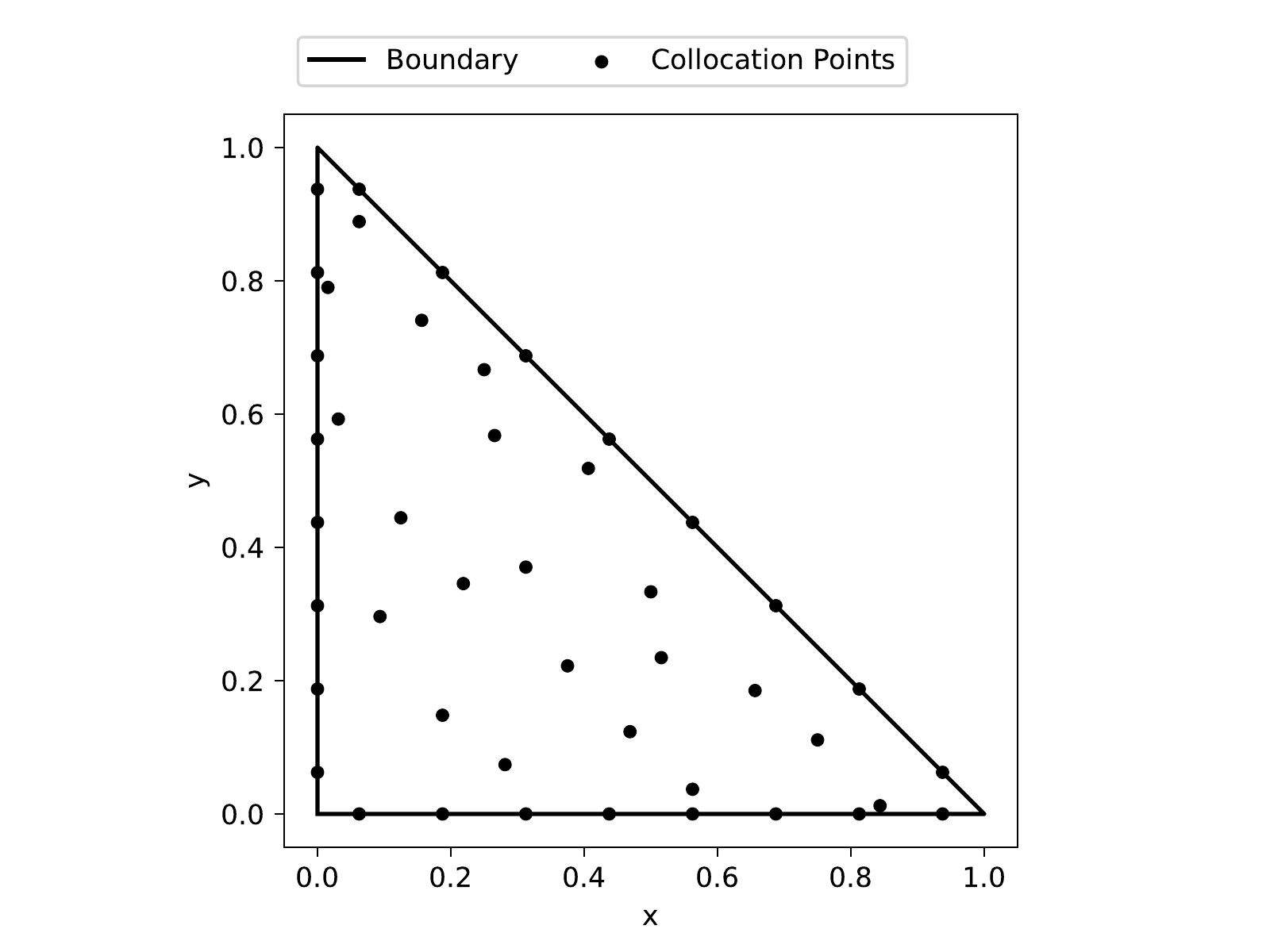}
		\caption{The 24 surface and 21 inner points used to construct the polynomial $\mathcal{F}_1$-exact MSBP operator} 
		\label{fig:triangle_poly}
	\end{subfigure}
	~ 	
	\begin{subfigure}{0.49\textwidth}
		\includegraphics[width=\textwidth]{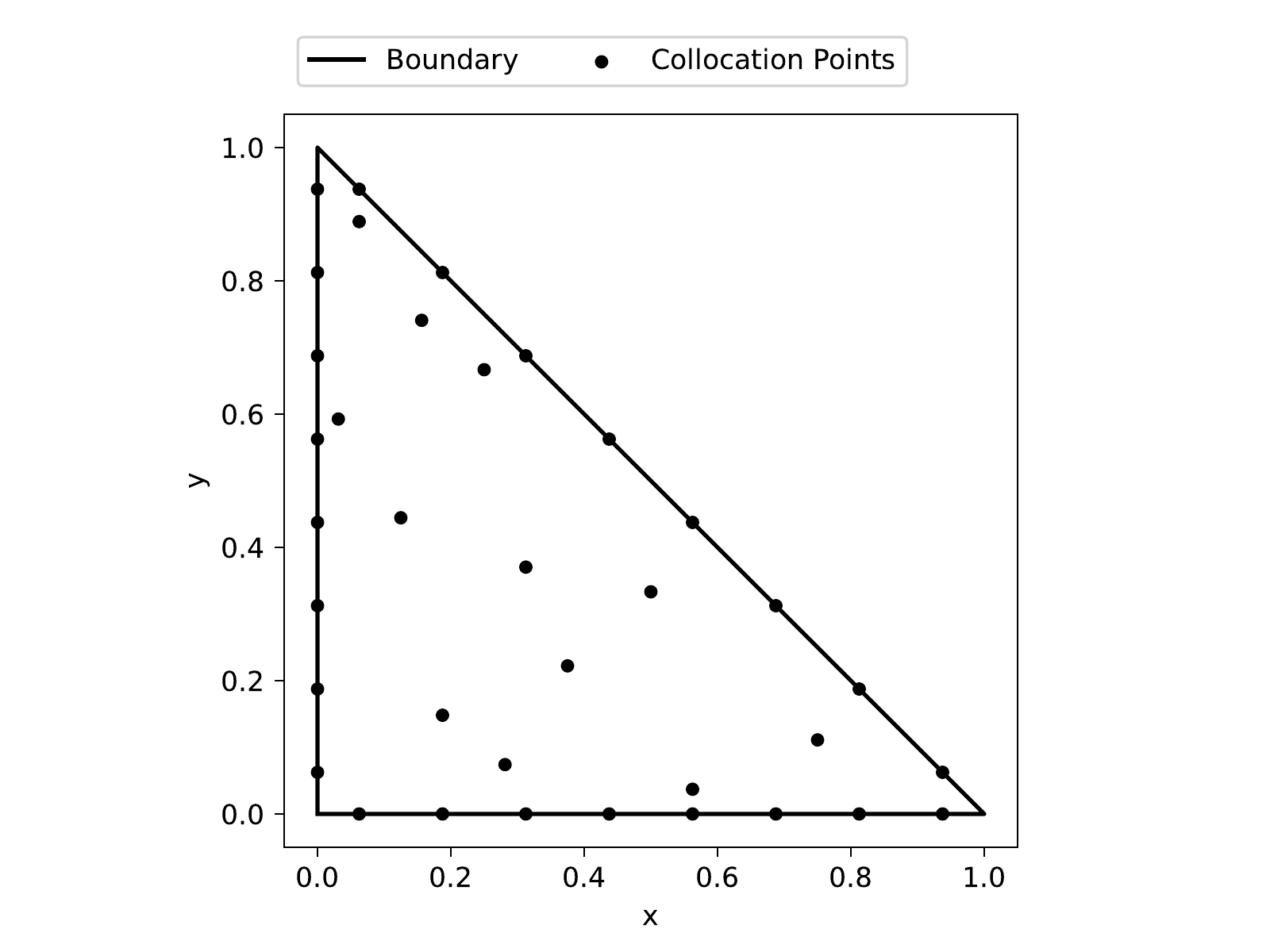}
		\caption{The 24 surface and 12 inner points used to construct the trigonometric $\mathcal{F}_2$-exact MFSBP operator} 
		\label{fig:triangle_trig}
	\end{subfigure}
	\caption{
	The surface and inner points used to construct the polynomial $\mathcal{F}_1$-exact and trigonometric $\mathcal{F}_2$-exact MFSBP operators. 
	The surface points are equidistant, and the inner points are Halton-distributed.
	}
	\label{fig:triangle}
\end{figure}

\subsection{MFSBP operators on the circle} 
\label{sub:examples_circle}

Next, we exemplify the construction of polynomial $\mathcal{F}_1$-exact and trigonometric $\mathcal{F}_2$-exact MFSBP operators, with the function spaces $\mathcal{F}_1$ and $\mathcal{F}_2$ as in \cref{eq:testFspace}, for the circular domain 
\begin{equation}
	\Omega = \left\{ \, \mathbf{x} \in \R^2 \mid \norm{\mathbf{x} - \mathbf{c}} \leq r \, \right \}
\end{equation}
with center $\mathbf{c} = [1/2, 1/2]^T$ and radius $r = 1/2$, see \cref{fig:circle}. 
The boundary of the disc consists of a single smooth part with a smoothly varying outward pointing unit normal vector $\boldsymbol{n}$. 
For the polynomial function space, we found a positive and $\mathcal{F}_1^2$-exact surface quadratures using $12$ equidistant points on the surface.
To next find a positive and $\partial_x( \mathcal{F}_1^2 )$-exact volume quadrature, following the construction procedure in \cref{sec:construction}, we had to add another $25$ points in the interior of $\Omega$, using a total number of $37$ points. 
\cref{fig:circle} illustrates the position of these points. 
We could not find desired volume quadratures using fewer Halton-distributed points in the interior of $\Omega$. 
For the trigonometric function space, we found a positive and $\mathcal{F}_2^2$-exact surface quadratures and a positive and $\partial_x( \mathcal{F}_1^2 )$-exact volume quadrature using the same points as for the polynomial $\mathcal{F}_1$-exact MSBP operator. 
Although the polynomial and trigonometric MFSBP operators use the same points, we found the trigonometric $\mathcal{F}_2$-exact MFSBP operator to yield significantly more accurate results for our numerical tests in \cref{sub:tests_disk}. 

\begin{figure}[tb]
	\centering
	\includegraphics[width=0.6\textwidth]{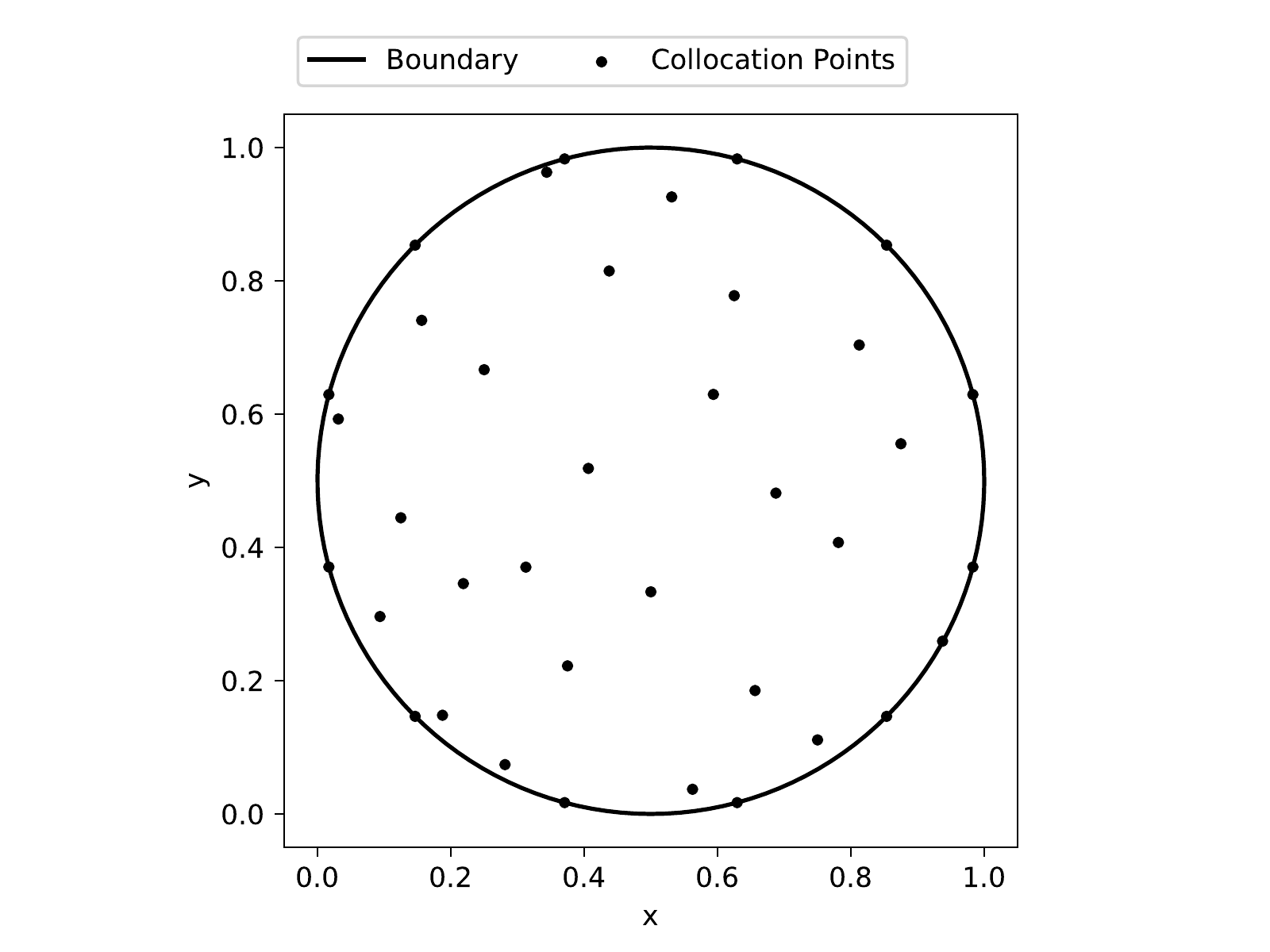}
	\caption{
	The 12 surface and 25 inner points used to construct the polynomial $\mathcal{F}_1$-exact and trigonometric $\mathcal{F}_2$-exact operators. 
	The surface points are equidistant, and the inner points are Halton-distributed.
	}
	\label{fig:circle}
\end{figure}

\begin{remark} 
	Although non-polygonal elements are rarely used in the context of SE-like methods for hyperbolic conservation laws, they are crucial in mesh-free RBF methods \cite{fasshauer2007meshfree,fornberg2015primer}. 
	In the recent work \cite{glaubitz2022energy}, we established a stability theory for global RBF methods using FSBP operators. 
	The MFSBP operators introduced here will allow us to extend this theory to the genuine multi-dimensional case, which will be addressed in future work, together with the extension to local RBF methods. 
\end{remark} 
\section{Numerical results} 
\label{sec:num_tests} 

We focus on the homogeneous and inhomogeneous linear advection equation because it provides prototypical examples for which non-polynomial approximation spaces are advantageous. 
Future work will consider more sophisticated problems.  
We used the explicit strong stability preserving (SSP) Runge--Kutta (RK) method of third order using three stages (SSPRK(3,3)) \cite{shu1988total} for all subsequent numerical tests. 
Furthermore, all tests are performed for conforming triangular grids. 
All tests were run with the elements scaled to be of unit length.
The Julia code used to generate the numerical tests presented here is open access and can be found on GitHub.\footnote{See \url{https://github.com/simonius/MFSBP}}

\subsection{The linear advection equation}
\label{sub:tests_linear}	

Consider the IBVP for the linear advection equation with constant coefficients, 
\begin{equation}\label{eq:testslinear}
\begin{aligned}
	\partial_t u+ a \partial_x u+ b \partial_y u & = 0, \quad && \forall (x,y) \in \Omega, \ t>0, \\ 
	u(0,x,y) & = u_0(x,y), \quad && \forall (x,y) \in \Omega, \\
	u(t,x,y) & = g(t,x,y), \quad && \forall (x,y) \in \Gamma_{-}, \ t \geq 0, 
\end{aligned}
\end{equation}
where $\Omega = [0, 1]^2$ and $\Gamma_{-} = \{ \, (x,y) \in \partial\Omega \mid a n_x+ b n_y<0 \, \}$ is the inflow part of the boundary of $\Omega$. 
In our numerical tests, we triangulated $\Omega$ by generating $K^2$ equally-sized squares and then decomposing each square into two equally-sized triangles. 
Given MFSBP operators $D_x=P^{-1} Q_x$ and $D_y=P^{-1} Q_y$ with mimetic boundary operators $B_x$ and $B_y$, we used the multi-block semi-discretization 
\begin{equation} \label{eq:LinAdvMBDisc}
	\derd{\mathbf{u}}{t} + a D_x \mathbf{u} + b D_y \mathbf{u} = P^{-1} B_x ( a\mathbf{u} - \fnum_x) + P^{-1} B_y (b\mathbf{u} - \fnum_y),
\end{equation}
where $\fnum_x$ and $\fnum_y$ are the numerical flux functions in the $x$- and $y$-direction, respectively, coupling neighboring blocks and weakly enforcing the boundary condition $u(t,x,y) = g(t,x,y)$ at the inflow part of the computation domain $\Omega$. 
Recall that the boundary operators $B_x$ and $B_y$ include the components of the outward pointing unit normal $\boldsymbol{n} = [n_x,n_y]^T$. 
We used the classical local Lax--Friedrichs (LLF) flux \cite{Lax1954Weak}, i.e, 
\begin{equation}\label{eq:llf} 
\begin{aligned}
	\fnum_x 
		& = \frac{a}{2} \left( u_i + u_o \right) - \mathrm{sign}(n_x) \frac{ c_{\mathrm{max}} }{2} \left( u_o - u_i \right), \\ 
	\fnum_y 
		& = \frac{b}{2} \left( u_i + u_o \right) - \mathrm{sign}(n_y) \frac{ c_{\mathrm{max}} }{2} \left( u_o - u_i \right),
\end{aligned}
\end{equation} 
where $u_i$ and $u_o$ are the inner and outer states at the two sides of the interface of the present block. 
Furthermore, $c_{\mathrm{max}}$ is an upper bound of the advection speed, ensuring that the LLF fluxes in \cref{eq:llf} are monotonic. 
For the linear advection equation, we chose $c_{\mathrm{max}} = \max\{ |a|, |b| \}$. 

\begin{figure}[tb]
	\centering 
	\begin{subfigure}{0.475\textwidth}
		\includegraphics[width=\textwidth]{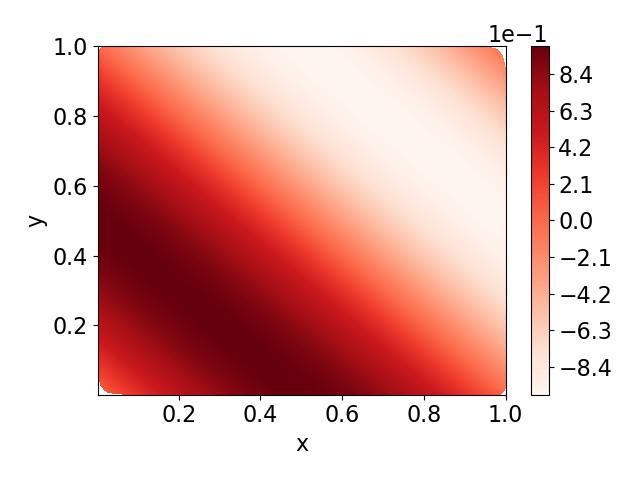}
		\caption{
		Numerical solution for the polynomial operator
		}
		\label{fig:TriLinAdvTest_solG}
	\end{subfigure}
	\begin{subfigure}{0.475\textwidth}
		\includegraphics[width=\textwidth]{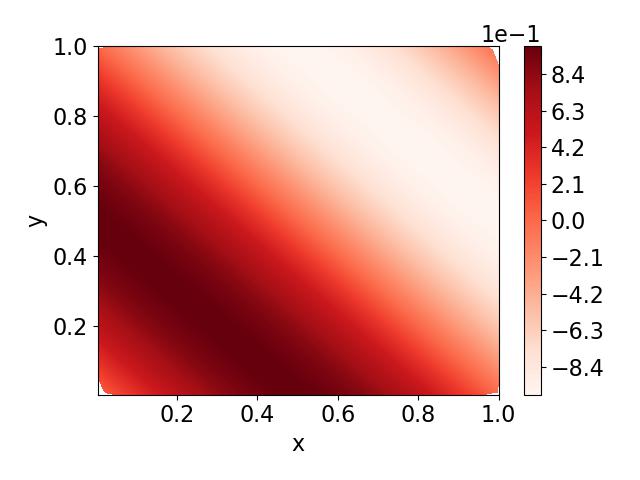}
		\caption{
		Numerical solution for the trigonometric operator
		}
		\label{fig:TriLinAdvTest_solF}
	\end{subfigure}
	\\
	\begin{subfigure}{0.475\textwidth}
		\includegraphics[width=\textwidth]{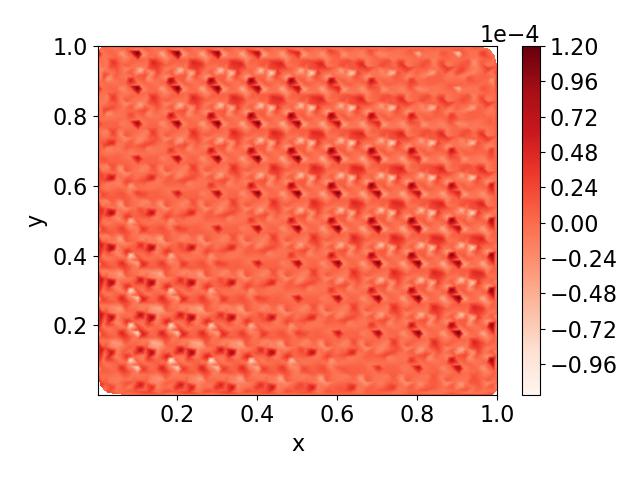}
		\caption{
		Point-wise error for the polynomial operator
		}
		\label{fig:TriLinAdvTest_errorG}
	\end{subfigure}
	\begin{subfigure}{0.475\textwidth}
		\includegraphics[width=\textwidth]{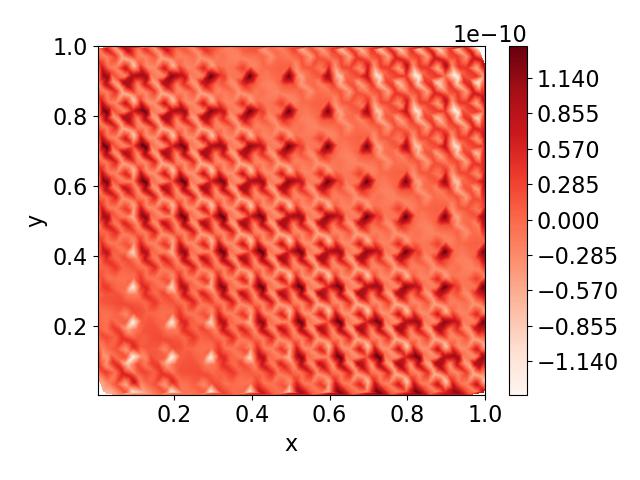}
		\caption{
		Point-wise error for the trigonometric operator
		}
		\label{fig:TriLinAdvTest_errorF}
	\end{subfigure}
	\caption{ 
	Numerical solutions and their point-wise errors of the IBVP \cref{eq:testslinear} at $t=1$ using the polynomial $\mathcal{F}_1$-exact and trigonometric $\mathcal{F}_2$-exact MFSBP operators. 
	We triangulated $\Omega$ by generating $K^2$ equally-sized squares with $K=10$ and then decomposing each square into two equally-sized triangles.
	Although $\mathcal{F}_1$ is ten-dimensional and $\mathcal{F}_2$ is five-dimensional, the trigonometric $\mathcal{F}_2$-exact MFSBP operator yields significantly more accurate results. 
	The contour plots do not fully cover the corners of the computational domain due to the used plotting routine. 
	In particular, to circumvent the issue of multivalued normals (see \cref{rem:corners}), we refrained from positioning nodes at the corners of the triangular reference element, thereby forgoing any extrapolation to these points.
	}
	\label{fig:TriLinAdvTest}
\end{figure}

We consider \cref{eq:testslinear} with constant velocity coefficients $a=b=1$, smooth initial condition $u_0(x, y) = \sin\left( \pi (x+y)\right)$, and inflow boundary data $g(x,y,t) = \sin\left( \pi ( x + y - (a+b) t ) \right)$ at the inflow part of the boundary $\Gamma_-$. 
The exact reference solution is 
\begin{equation}\label{eq:AdvRefSol}
	u(x, y, t) 
		= u_0\left(x-at, y-bt\right) 
		= \sin\left( \pi ( x + y )  - 2 \pi t \right)
\end{equation} 
for $a=b=1$. 
\cref{fig:TriLinAdvTest} illustrates the numerical solutions and their point-wise errors at time $t=1$ for the polynomial $\mathcal{F}_1$-exact and trigonometric $\mathcal{F}_2$-exact MFSBP operators. 
Both function spaces are described in \cref{eq:testFspace}, where we chose $\omega = \pi$ as the frequency parameter in $\mathcal{F}_2$. 
Recall from \cref{eq:testFspace} that the polynomial function space $\mathcal{F}_1$ is ten-dimensional, while the trigonometric one $\mathcal{F}_2$ is five-dimensional. 
Still, comparing the errors in \cref{fig:TriLinAdvTest_solG,fig:TriLinAdvTest_errorG}, the trigonometric $\mathcal{F}_2$-exact MFSBP operator yields significantly more accurate results.
We triangulated $\Omega$ by generating $K^2$ equally-sized squares with $K=10$ and then decomposing each square into two equally-sized triangles.
	
\begin{figure}[tb]
	\centering
	\includegraphics[width=0.6\textwidth]{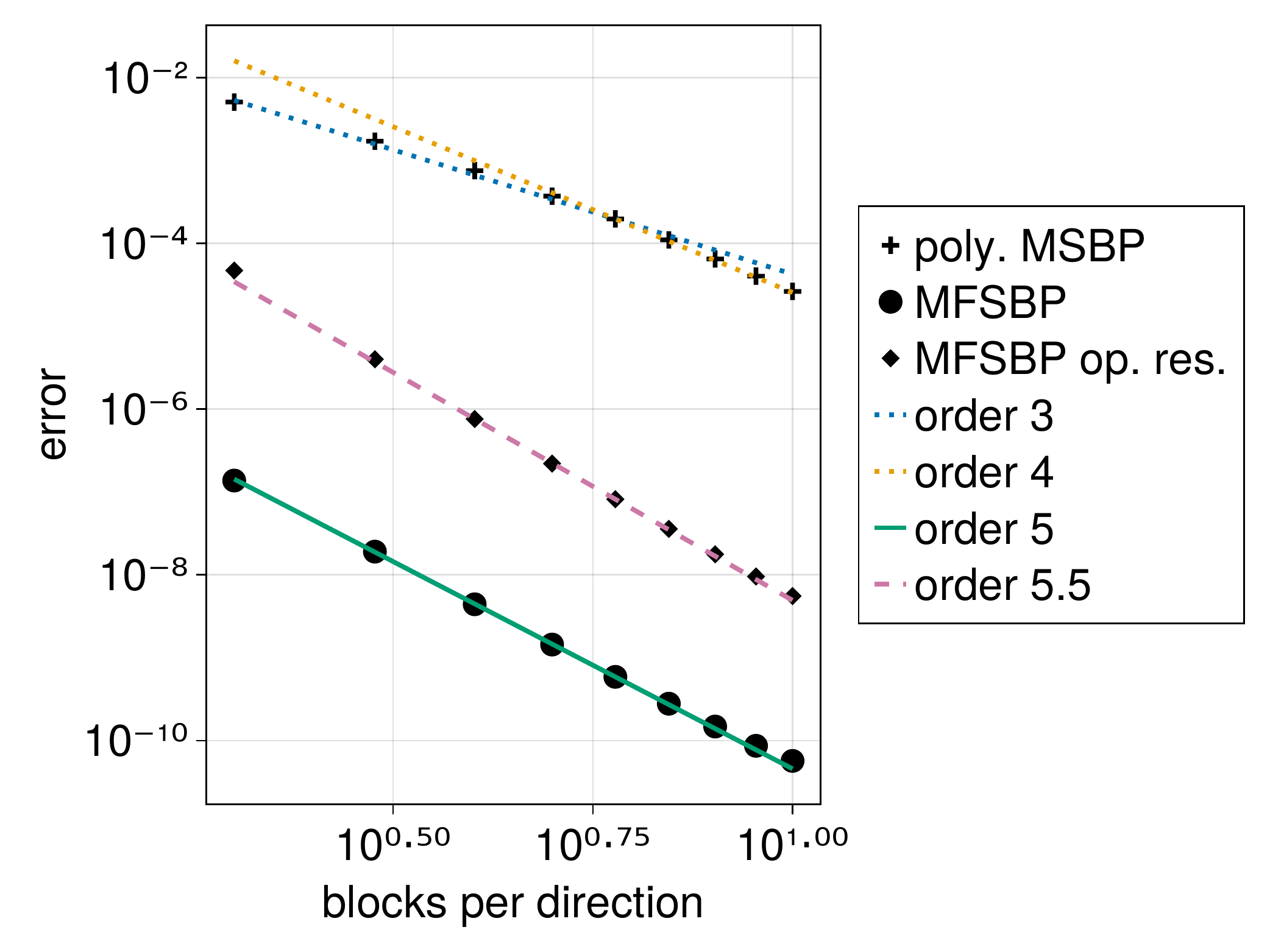}
	\caption{ 
	Errors of the numerical solutions to the IBVP \cref{eq:testslinear} at $t=1$ using the polynomial $\mathcal{F}_1$-exact (plusses along the dotted lines) and trigonometric $\mathcal{F}_2$-exact (dots along the straight line) MFSBP operators, along the residual \cref{eq:residual} of the trigonometric MFSBP operator (diamonds along the dashed line), for an increasing number of triangular blocks. 
	The trigonometric MFSBP operator yields significantly smaller errors than the polynomial one. 
	}
	\label{fig:convana}
\end{figure}

We next provide a brief error analysis, further comparing the polynomial $\mathcal{F}_1$-exact and trigonometric $\mathcal{F}_2$-exact MFSBP operators. 
To this end, we performed the same test for the IBVP \cref{eq:testslinear} with different numbers of triangular blocks. 
\cref{fig:convana} reports on the corresponding errors. 
Thereby, for $K=3,\dots,10$, we again considered the domain $\Omega = [0, 1]^2$, triangulized by first partitioning $\Omega$ into $K^2$ equally-sized squares and then decomposing each square into two equally-sized triangles. 
The errors in \cref{fig:convana} were calculated as 
\begin{equation}\label{eq:tests_error}
	E = \sqrt{ \frac{1}{N \abs{\mathcal{T}}} \sum_{T \in \mathcal{T}} \sum_{n = 1}^N \abs{u\left( x_{n}^{(T)}, y_{n}^{(T)}, t \right) - u_{n}^{(T)} (t) }^2 },
\end{equation}  
where $\mathcal{T}$ is the collection of all triangular elements and $u\left( x_{n}^{(T)}, y_{n}^{(T)}, t \right)$ and $u_{n}^{(T)} (t)$ denote the nodal value of the reference and numerical solution at the $N$ grid points in the elements $T \in \mathcal{T}$, respectively. 
The errors reported in \cref{fig:convana} were obtained for a fixed time step size $\Delta t = 10^{-3}$. 
We used a fixed time step size to focus solely on the errors of the spatial semi-discretizations while making errors introduced by the SSPRK(3,3) time integration method neglectable. 
\cref{fig:convana} demonstrates that the polynomial $\mathcal{F}_1$-exact MSBP operator yields third- to fourth-order convergence. 
At the same time, we observe fifth-order convergence for the trigonometric $\mathcal{F}_2$-exact MFSBP operator. 
A possible explanation for the observed, higher than expected, convergence rate of the trigonometric MFSBP operator is the vanishing residual of the operator, 
\begin{equation}\label{eq:residual}
	\norm{D_x V - V_x}_2, 
\end{equation}
which is shown in \cref{fig:convana} to converge to zero with a similar order. 
Here, $\| A \|_2 = \sqrt{ \sum_{n,m=1}^N [A]_{n,m}^2 }$ is the Frobenius norm of the matrix $A \in \R^{N \times N}$. 
A more detailed investigation of the convergence rate of the proposed MFSBP operators will be provided in future works. 

\begin{remark} 
	We have conducted the above experiments with various advection directions, diverging from the previously used $a=b=1$. 
	The different advection directions do not noticeably influence the performance of the MFSBP-based scheme. 
	This is because the solution is a trigonometric function and is therefore well represented by the function space $\mathcal{F}_2$, regardless of the advection direction.
\end{remark}

\subsection{Including a radial basis function}

We now revisit the test problem described in \cref{sub:tests_linear}, this time employing an MFBP operator on the triangle that is exact for a function space including a Gaussian RBF. 
This demonstration serves two objectives: 
Firstly, it provides an example of a non-polynomial function space, beyond trigonometric functions. 
Secondly, it underscores the notion that the efficacy of MFSBP-based methodologies relies on the degree to which the function space $\mathcal{F}$ encapsulates the true underlying solution.
To elaborate, note that the solution for the previously considered test problem in \cref{sub:tests_linear} was a trigonometric function. 
Consequently, the trigonometric function space $\mathcal{F}_2$ mirrors this characteristic.
In contrast, we now examine a four-dimensional function space including an RBF function, given by
\begin{equation}
	\mathcal{F}_3 = \lspan \left\{1, x, y, \exp\left(- \| x-x_0 \|^2 / d^2 \right) \right\}.
\end{equation}
This function space operates on the reference triangle, with the RBF centered at $x_0 = (1/3, 1/3)$ and displaying a characteristic diameter of $d = 1/5$.
Despite our ability to construct an $\mathcal{F}_3$-exact MFSBP operator (see \cref{rem:F3} for more details on the construction), the resultant numerical solution is less accurate than that derived using the trigonometric $\mathcal{F}_2$-exact MFSBP or the polynomial $\mathcal{F}_1$-exact MSBP operator from before.
This disparity in accuracy is depicted in \cref{fig:LinRBFsol}, which presents the numerical solution and point-wise error for the $\mathcal{F}_3$-exact MFSBP operator, juxtaposed against the test problem from \cref{fig:TriLinAdvTest_errorF}.
As before, we triangulated $\Omega$ by generating $K^2$ equally-sized squares with $K=10$ and then decomposing each square into two equally-sized triangles.
As anticipated, we see that the $\mathcal{F}_3$-exact MFSBP operator yields less accurate results than the trigonometric $\mathcal{F}_2$-exact MFSBP as well as the polynomial $\mathcal{F}_1$-exact MSBP operator. 
This can be attributed to $\mathcal{F}_3$'s less effective representation of the true underlying solution. 

\begin{figure}
	\begin{subfigure}{0.48\textwidth}
		\includegraphics[width=\textwidth]{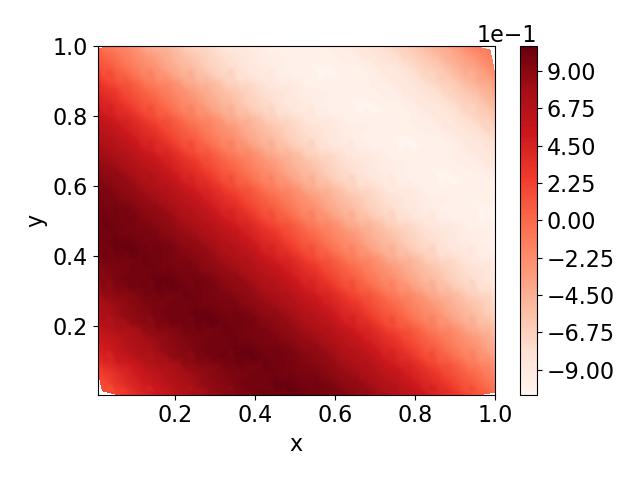}
	\caption{Numerical solution for the RBF operator}
	\end{subfigure}
	\begin{subfigure}{0.48\textwidth}
		\includegraphics[width=\textwidth]{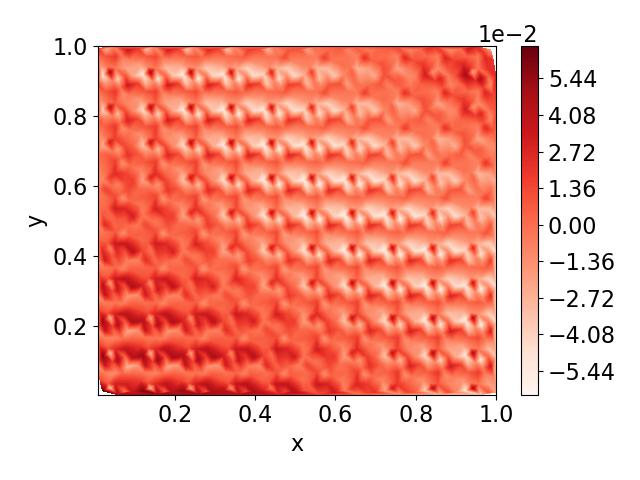}
	\caption{Point-wise error for the RBF operator}
	\end{subfigure}
	\caption{
	The numerical solution and its point-wise errors of the IBVP \cref{eq:testslinear} at $t=1$ using the $\mathcal{F}_3$-exact MFSBP operators. 
	We triangulated $\Omega$ by generating $K^2$ equally-sized squares with $K=10$ and then decomposing each square into two equally-sized triangles.
	The contour plots do not cover the corners of the computational domain due to the used plotting routine.
	}
	\label{fig:LinRBFsol}
\end{figure}

\begin{remark}\label{rem:F3}
	We found a positive and $\mathcal{F}_3^2$-exact surface quadrature using seven equidistant points on each linear part. 
	(Again, no nodes were placed on the corners of the triangle to avoid undefined normal vectors.) 
	Observe that $\partial_x (\mathcal{F}_3^2) = \partial_y (\mathcal{F}_3^2)$ and it therefore suffices to additionally find a positive $\partial_x (\mathcal{F}_3^2)$-exact volume quadrature to get an MFSBP operator. 
	To find such a volume quadrature, we used 9 Halton-distributed points in the triangle's interior. 
\end{remark}

\subsection{The inhomogeneous linear advection equation}
\label{sub:tests_steady}

We next consider the IBVP for the \emph{inhomogenous} linear advection equation with constant coefficients, 
\begin{equation}\label{eq:testslinear2}
\begin{aligned}
	\partial_t u+ a \partial_x u+ b \partial_y u & = s(u,x,y), \quad && \forall (x,y) \in \Omega, \ t>0, \\ 
	u(0,x,y) & = u_0(x,y), \quad && \forall (x,y) \in \Omega, \\
	u(t,x,y) & = g(t,x,y), \quad && \forall (x,y) \in \Gamma_{-}, \ t \geq 0, 
\end{aligned}
\end{equation}
with source term $s(u,x,y) = \omega \cos(\omega x)\sin(\omega y) + \omega \sin(\omega x) \cos(\omega y) + \sin(\omega x) + \sin(\omega y) - u$.
We consider \cref{eq:testslinear2} on $\Omega = [0, 1]^2$ and with constant velocity coefficients $a=b=1$. 
The generated grid was again constructed by first partitioning $\Omega$ into $K^2$ equally-sized squares before decomposing each square into two equally-sized triangles. 
We used zero initial and inflow boundary conditions, $u_0 \equiv 0$ and $g \equiv 0$. 
For these choices, \cref{eq:testslinear2} admits the trigonometric steady state solution $u(x, y) = \sin(\omega x) \sin(\omega y)$. 
We solved \cref{eq:testslinear2} again using the MFSBP semi-discretization \cref{eq:LinAdvMBDisc}. 
This time, we compare the polynomial $\mathcal{F}_1$-exact MSBP operator (see \cref{eq:testFspace} for the definition of $\mathcal{F}_1$) with a trigonometric $\mathcal{F}_4$-exact MFSBP operator in the triangular domain \cref{eq:triangular_domain}. 
Here, $\mathcal{F}_4$ is a seven-dimensional mixed polynomial-trigonometric function space defined as
\begin{equation}
	\mathcal{F}_4 
		= \lspan\left\{ 1, x, y, \sin(\omega x)\sin(\omega y), \cos(\omega x) \cos(\omega y), \sin(\omega x) \cos(\omega y), \cos(\omega x) \sin(\omega y) \right \},
\end{equation}
where we set $\omega = 2 \pi $ in our numerical tests.  
For the triangle, we found a positive and $\mathcal{F}_4^2$-exact surface quadrature using 12 equidistant points on each linear part. 
(Again, no nodes were placed on the corners of the triangle to avoid undefined normal vectors.) 
Observe that $\partial_x (\mathcal{F}_4^2) = \partial_y (\mathcal{F}_4^2)$ and it therefore suffices to additionally find a positive $\partial_x (\mathcal{F}_4^2)$-exact volume quadrature to get an MFSBP operator. 
To find such a volume quadrature, we used 24 Halton-distributed points in the triangle's interior. 

\begin{figure}[tb]
	\centering
	\begin{subfigure}{0.475\textwidth}
		\includegraphics[width=\textwidth]{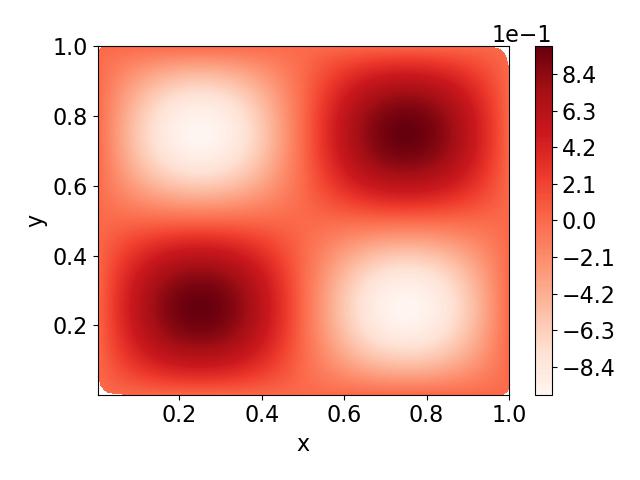}
		\caption{
		Numerical solution for the polynomial operator
		}
		\label{fig:steadySol_solG}
	\end{subfigure}
	\begin{subfigure}{0.475\textwidth}
		\includegraphics[width=\textwidth]{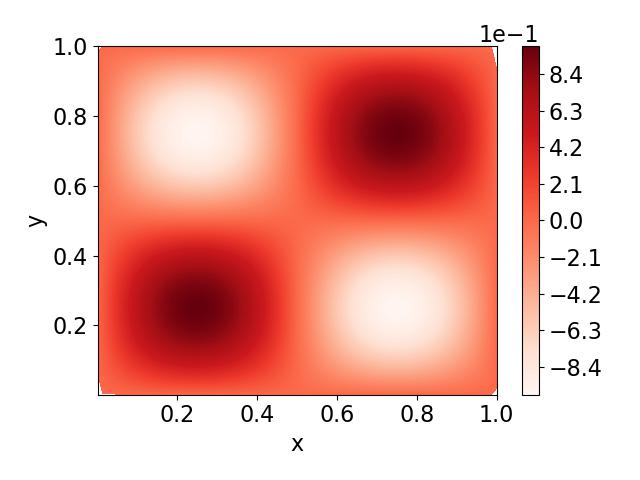}
		\caption{
		Numerical solution for the trigonometric operator
		}
		\label{fig:steadySol_solH}
	\end{subfigure}
	\\
	\begin{subfigure}{0.475\textwidth}
		\includegraphics[width=\textwidth]{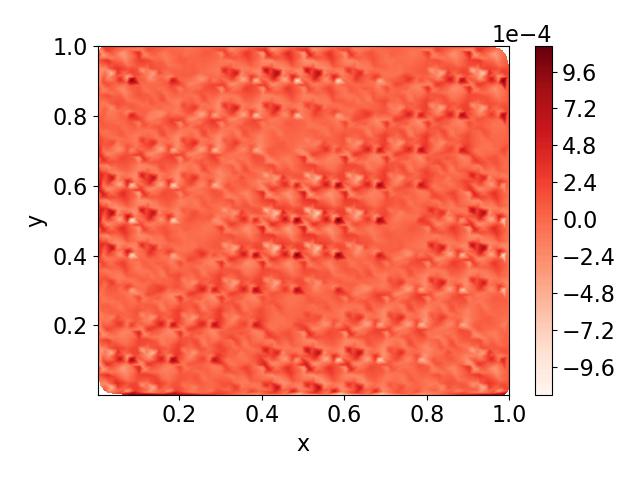}
		\caption{
		Point-wise errors for the polynomial operator
		}
		\label{fig:steadySol_errorG}
	\end{subfigure}
	\begin{subfigure}{0.475\textwidth}
		\includegraphics[width=\textwidth]{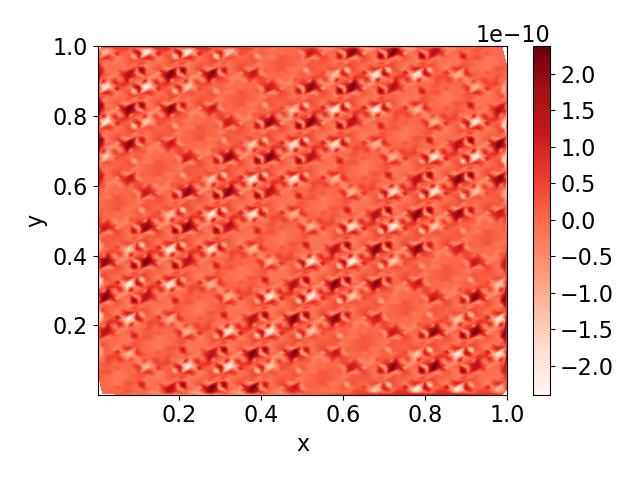}
		\caption{
		Point-wise errors for the trigonometric operator
		}
		\label{fig:steadySol_errorH}
	\end{subfigure}
	\caption{  
	Numerical solutions and their point-wise errors of the IBVP \cref{eq:testslinear2} at $t=2$ using the polynomial $\mathcal{F}_1$-exact and trigonometric $\mathcal{F}_4$-exact MFSBP operators. 
	Although $\mathcal{F}_1$ is ten-dimensional and $\mathcal{F}_4$ is seven-dimensional, the trigonometric $\mathcal{F}_4$-exact MFSBP operator yields significantly more accurate results. 
	We triangulated $\Omega$ by generating $K^2$ equally-sized squares with $K=10$ and then decomposing each square into two equally-sized triangles. 
	The contour plots do not cover the corners of the computational domain due to the used plotting routine. 
	}	
	\label{fig:steadySol}
\end{figure}

\cref{fig:steadySol} illustrates the numerical solutions and their point-wise errors at time $t=2$ for the polynomial $\mathcal{F}_1$-exact and trigonometric $\mathcal{F}_4$-exact MFSBP operators. 
Although the polynomial space $\mathcal{F}_1$ is ten-dimensional and the trigonometric space $\mathcal{F}_4$ is seven-dimensional, we see in \cref{fig:steadySol} that the trigonometric MSFBP operator yields a significantly more accurate numerical solution. 
We triangulated $\Omega$ by generating $K^2$ equally-sized squares with $K=10$ and then decomposing each square into two equally-sized triangles.

\begin{figure}[tb] 
	\centering
	\includegraphics[width=0.6\textwidth]{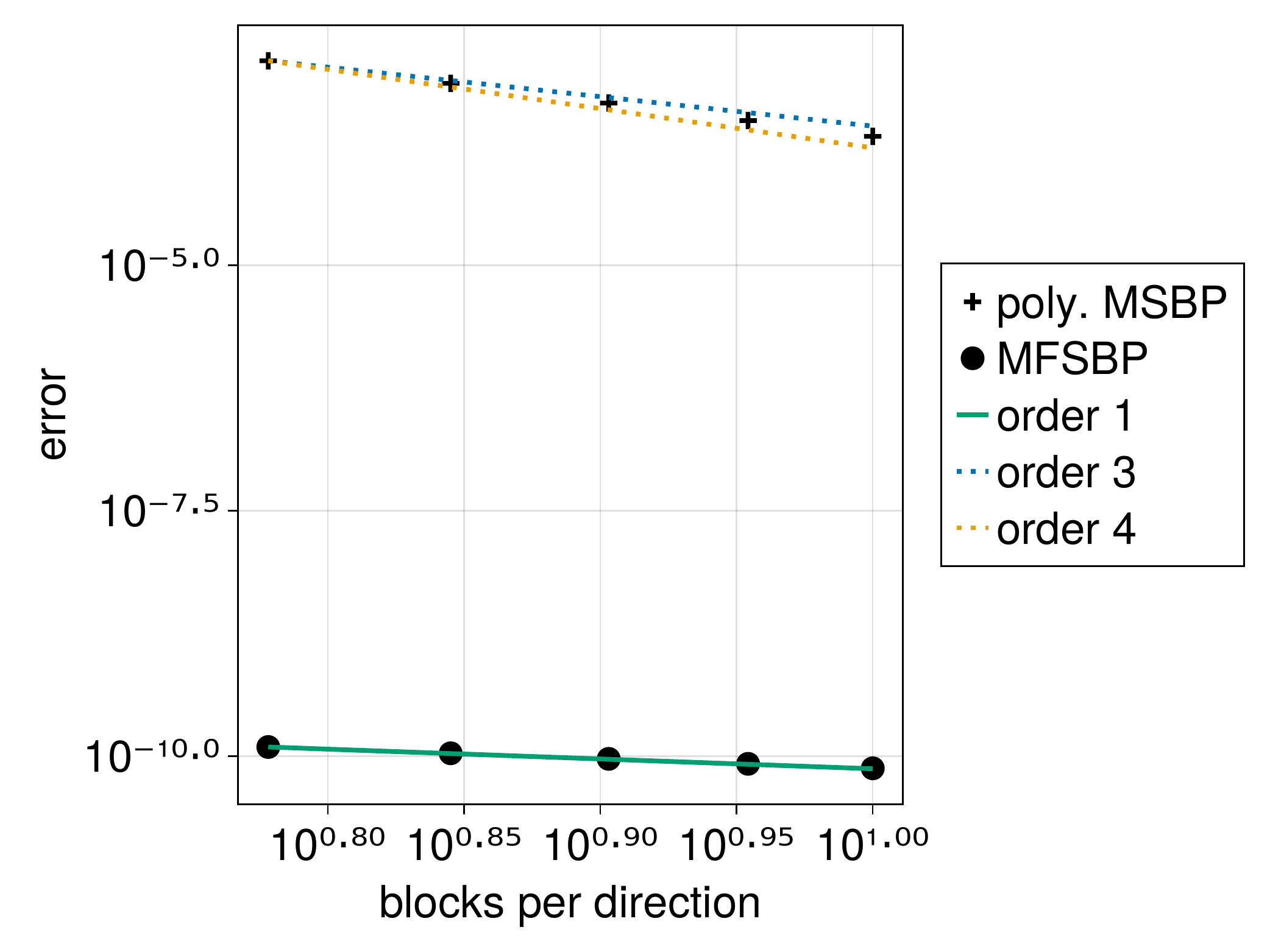}
	\caption{
	Errors of the numerical solutions to the IBVP \cref{eq:testslinear2} at $t=2$ using the polynomial $\mathcal{F}_1$-exact (plusses along the dotted lines) and trigonometric $\mathcal{F}_4$-exact (dots along the straight line) MFSBP operators for an increasing number of triangular blocks. 
	The trigonometric MFSBP operator yields significantly smaller errors than the polynomial one. 
	}
	\label{fig:sconvana}
\end{figure}

We again follow up with a brief error analysis. 
To this end, we performed the same test for the IBVP \cref{eq:testslinear2} with different numbers of triangular blocks. 
\cref{fig:sconvana} reports on the corresponding errors of the polynomial $\mathcal{F}_1$-exact and trigonometric $\mathcal{F}_4$-exact MFSBP operators. 
As before, the polynomial $\mathcal{F}_1$-exact MSBP operator yields third- to fourth-order convergence. 
At the same time, in all cases, the trigonometric $\mathcal{F}_4$-exact MFSBP operator yields errors around $10^{-10}$, which change only slightly as the number of blocks increases. 
We suspect that the reason for this is that the trigonometric space $\mathcal{F}_4$ can exactly represent the exact steady-state solution. 
Hence, we obtain the exact solution module round-off errors using the $\mathcal{F}_4$-exact MFSBP operator. 
Increasing the number of blocks does not significantly decrease these round-off errors. 
Similar results were reported in \cite{yuan2006discontinuous} for DG methods based on non-polynomial approximation spaces---although without the SBP property and the resulting discrete stability of the scheme.

\subsection{Advection on a circular disk}
\label{sub:tests_disk}

We end this section by demonstrating the geometric flexibility of MFSBP operators. 
To this end, 
we consider the IBVP \cref{eq:testslinear} for the linear advection equation from \cref{sub:tests_linear} with constant velocities $a=b=1$ on the \emph{circular disk} 
\begin{equation}\label{eq:tests_disk}
	\Omega = \left\{ \, \mathbf{x} \in \R^2 \mid \norm{\mathbf{x} - \mathbf{c}} \leq r \, \right \}
\end{equation}
with center $\mathbf{c} = [1/2, 1/2]^T$ and radius $r = 1/2$. 
Also see \cref{fig:circle}. 
We compare the polynomial $\mathcal{F}_1$-exact and trigonometric $\mathcal{F}_2$-exact MFSBP operators already discussed in \cref{sub:examples_circle}. 
\cref{fig:SolCirc} illustrates the reference solution and the point-wise errors numerical solutions at time $t=1$ for the polynomial $\mathcal{F}_1$-exact and trigonometric $\mathcal{F}_2$-exact MFSBP operators. 
Once again, the trigonometric MSFBP operator yields a significantly more accurate numerical solution than the polynomial MSBP operator. 

\begin{figure}[tb]
	\centering 
	\begin{subfigure}{0.32\textwidth}
		\includegraphics[width=\textwidth]{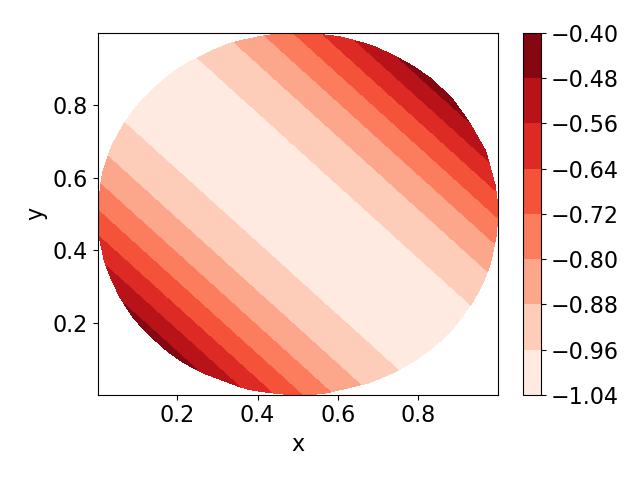}
		\caption{Reference solution}
		\label{fig:SolCirc_num_ref}
	\end{subfigure}
	\begin{subfigure}{0.32\textwidth}
		\includegraphics[width=\textwidth]{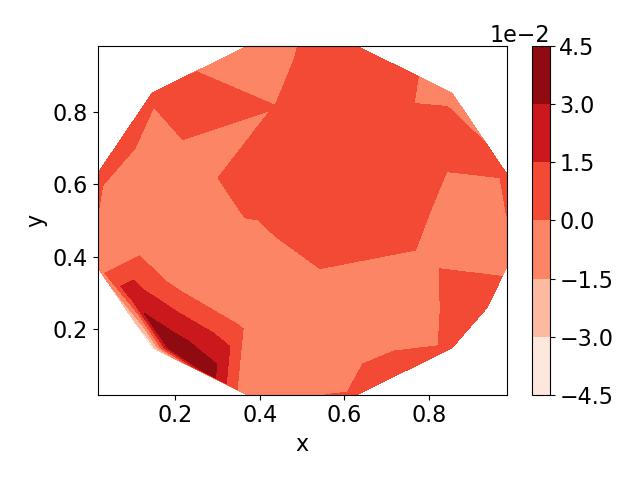}
		\caption{
		Error for the polynomial operator
		} 
		\label{fig:SolCirc_error_poly}
	\end{subfigure}
	\begin{subfigure}{0.32\textwidth}
		\includegraphics[width=\textwidth]{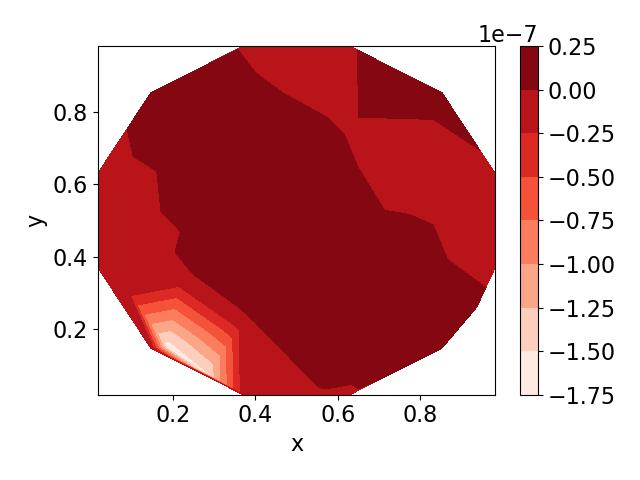}
		\caption{
		Error for the trigonometric operator
		} 
		\label{fig:SolCirc_error_trig}
	\end{subfigure}
	\caption{ 
	The reference solution and point-wise errors of the numerical solutions of the IBVP \cref{eq:testslinear} on the circular disk \cref{eq:tests_disk} at $t=1$ using the polynomial $\mathcal{F}_1$-exact and trigonometric $\mathcal{F}_2$-exact MFSBP operators. 
	The trigonometric $\mathcal{F}_2$-exact MFSBP operator yields significantly more accurate results. 
	}
	\label{fig:SolCirc}
\end{figure}

\section{Concluding thoughts} 
\label{sec:summary} 

We introduced the concept of diagonal-norm MFSBP operators, thereby generalizing existing one-dimensional FSBP operators \cite{glaubitz2022summation} to multi-dimensional geometries and polynomial MSBP operators \cite{hicken2016multidimensional} to general function spaces. 
Our new MFSBP operators allow us to systematically develop stable and high-order accurate numerical methods that adapt to the characteristic behavior of the underlying solution. 
In this first paper, we focused on first establishing their theoretical foundation, including their mimetic properties and existence, and provided a general construction procedure. 
In particular, we showed that most mimetic properties of polynomial SBP operators carry over to the more general class of MFSBP operators. 
We stress that for MFSBP operators to mimic conservation, the associated approximation space needs to include constants, which polynomial MSBP operators satisfy by construction. 

Furthermore, we connected the existence of MFSBP operators and positive quadratures that are exact for certain, in general, non-polynomial function spaces. 
Building upon the theoretical existence investigation, we derived a general construction procedure for MFSBP operators. 
An essential part of this procedure is that a positive and $\partial_{\boldsymbol{\xi}} (\mathcal{F}^2)$-exact quadrature must exist if an $\mathcal{F}$-exact MFSBP operators approximating the directional derivative $\partial_{\boldsymbol{\xi}}$ is desired. 
While the existence and construction of such quadratures were established in \cite{glaubitz2022constructing} using a simple least squares approach, the resulting quadratures are not necessarily optimal in the sense that the number of quadrature points is notably larger than the dimension of the function space for which they are exact. 
Constructing optimal (so-called Gaussian) quadratures for general multi-dimensional function spaces is an open problem. 
We will further investigate the optimization and efficiency of our MFSBP operators. 
However, such efforts need to be tailored to specific function spaces and will be carried out in future work. 

While we demonstrated the advantage of using MFSBP operators to solve numerical PDEs for different linear problems, where prior knowledge of which approximation space should be used is readily available, a more exhaustive numerical study will be provided in future work. 
Future research efforts also include investigating the CFL limits and dispersion properties of MFSBP operators (similar to \cite{gassner2011comparison}), strategies for adaptively changing the approximation space (see \cite{yuan2006discontinuous}), 
stabilization techniques, such as split forms \cite{svard2014review,fernandez2014review,gassner2016split}, artificial dissipation \cite{mattsson2004stable,ranocha2018stability,offner2020analysis}, and other shock-capturing procedures, in combination with their application to discontinuous problems. 

\appendix 
\section{The Projection Onto Convex Sets algorithm} 
\label{app:POCS} 

The Projection Onto Convex Sets (POCS) algorithm, sometimes also called the alternating projection algorithm, is a method to find a point in the intersection of two closed convex sets. 
Given the two overlapping convex sets $C$ and $D$, the POCS algorithm finds a point $x \in C \cap D$ by alternatingly projecting onto the sets $C$ and $D$. 
Given an arbitrary initial point $x_0$, the POCS algorithm produces a sequence of points $(x_k)_{k \in \N}$ with 
\begin{equation} 
	x_{k+1} = \operatorname{proj}_{D}\left( \operatorname{proj}_{C}(x_k) \right),
\end{equation} 
where $\operatorname{proj}_{C}$ and $\operatorname{proj}_{D}$ are projections onto $C$ and $D$, respectively. 
\cref{fig:POCS} illustrates the POCS algorithm for the case of $C$ and $D$ being (affine) linear spaces. 
See \cite{Neumann1950Functional,GUBIN1967The,escalante2011alternating} for more details on the POCS algorithm.

\subsection{The POCS algorithm and surface quadratures} 
\label{app:POCS_surface}

In \cref{sub:constr_B}, we pointed out the necessity of finding positive and $\mathcal{F}^2$-exact surface quadrature on each smooth boundary part $\Gamma_j$, $j=1,\dots,J$, to then construct a mimetic boundary operator. 
We can find such quadratures using the POCS algorithm. 
To this end, let $\{ f_l \}_{l=1}^L$ be a basis of $\mathcal{F}^2$. 
The exactness conditions \cref{eq:exactness_cond_surface} can then be formulated as a linear system for the weight vector $\mathbf{v} = [v_1,\dots,v_M]^T$, 
\begin{equation}\label{eq:POCS_surface_linear}
	A \mathbf{v} = \mathbf{b},
\end{equation}
where the coefficient matrix $A$ and the right-hand side vector $\mathbf{b}$ are given by 
\begin{equation}\label{eq:surfexact}
	A = 
	\begin{bmatrix} 
		( \boldsymbol{\xi} \cdot \boldsymbol{n}(\mathbf{x}^{(j)}_1) ) f_1(\mathbf{x}^{(j)}_1) & \dots & ( \boldsymbol{\xi} \cdot \boldsymbol{n}(\mathbf{x}^{(j)}_M) ) f_1(\mathbf{x}^{(j)}_M) \\ 
		\vdots & & \vdots \\ 
		( \boldsymbol{\xi} \cdot \boldsymbol{n}(\mathbf{x}^{(j)}_1) ) f_L(\mathbf{x}^{(j)}_1) & \dots & ( \boldsymbol{\xi} \cdot \boldsymbol{n}(\mathbf{x}^{(j)}_M) ) f_L(\mathbf{x}^{(j)}_M)
	\end{bmatrix}, 
	\quad 
	\mathbf{b} = 
	\begin{bmatrix} 
		I^{(\Gamma_j)}[f_1] \\ \vdots \\ I^{(\Gamma_j)}[f_L]
	\end{bmatrix}.
\end{equation} 
If $M > L$ and $A$ has linearly independent rows, then \cref{eq:POCS_surface_linear} has infinitely many solutions, which form an affine linear subspace of $\R^{M}$. 
In particular, the solution space is a closed convex set and can therefore be used as $C$ in the POCS algorithm, i.e., 
\begin{equation} 
	C = \left\{ \, \mathbf{v} \in \R^M \mid A \mathbf{v} = \mathbf{b} \, \right\}.
\end{equation}
The second restriction on the surface quadrature is positivity, i.e., $v_m > 0$ for all $m=1,\dots,M$. 
This motivates us to use 
\begin{equation} 
	D = \left\{ \, \mathbf{v} \in \R^M \mid v_m \geq 0,\, m=1,\dots,M \, \right\}
\end{equation}
as the second closed convex set in the POCS algorithm. 
We restrict the $v_m$'s to be non-negative instead of positive to ensure that $D$ is a \emph{closed} convex set.
While this does only ensure that the surface quadrature is non-negative, we can go over to a positive quadrature by removing all zero weights and the corresponding points. 
The orthogonal projections onto $C$ and $D$ are 
\begin{equation} 
	\label{eq:surfpocs}
\begin{aligned}
	\operatorname{proj}_{C}(\mathbf{v}) 
		& =  \mathbf{v} + A^+ \left( \mathbf{b} - A\mathbf{v} \right), \\ 
	\left[ \operatorname{proj}_{D}(\mathbf{v}) \right]_m
		& = \max(v_m, 0), \quad m=1,\dots,M,
\end{aligned}
\end{equation}
where $\left[ \operatorname{proj}_{D}(\mathbf{v}) \right]_m$ denotes the $m$-th component of $\operatorname{proj}_{D}(\mathbf{v}) \in \R^M$ and $A^+$ is the Moore--Penrose pseudo-inverse of $A$.
Since $A$ has linearly independent rows, $A A^T$ is invertible and $A^+ = A^T (A A^T)^{-1}$ constitutes a left inverse. 
See \cite{higham1988symmetric,escalante2011alternating} for more details.

\begin{remark}\label{rem:intersection_surface} 
	It was proved in \cite{glaubitz2021stableCFs,glaubitz2022constructing} that a positive and $\mathcal{F}^2$-exact least-squares quadrature is found whenever (i) $\mathcal{F}^2$ includes constants and (ii) sufficiently many points are used.	
	This implies that, under the same assumptions, the intersection of the closed and convex sets $C$ and $D$ is non-empty, and the POCS algorithm is ensured to converge to the desired weights of a positive and $\mathcal{F}^2$-exact surface quadrature. 
\end{remark}

\begin{remark}\label{rem:basis_surface} 
	If a basis of $\mathcal{F}^2$ is not readily available, the linear system \cref{eq:POCS_surface_linear}, encoding $\mathcal{F}^2$-exactness of the desired surface quadrature, can also be formulated using a generating set of $\mathcal{F}^2$. 
	If $\{ f_k \}_{k=1}^K$ is a basis of $\mathcal{F}$, then $\{ g_k \}_{k=1}^{K^2}$ with $g_{i + K(j-1)} = f_i f_j$, $i,j=1,\dots,K$, is a natural generating set. 
	The price we have to pay for the gained simplicity is that $\{ g_k \}_{k=1}^{K^2}$ contains more elements than the basis of $\mathcal{F}^2$, which makes the linear system \cref{eq:POCS_surface_linear} larger and therefore computationally more expensive to solve.  
	However, in our implementation, we observed the additional costs as insignificant compared to the overall computational costs. 
\end{remark}

\subsection{The POCS algorithm and volume quadratures} 
\label{app:POCS_volume}

As described in \cref{sub:constr_P}, we need to find a positive and $\partial_{\boldsymbol{\xi}} ( \mathcal{F}^2 )$-exact volume quadrature before we can construct an FSBP operator. 
To this end, we proceed similarly as in \cref{app:POCS_surface} and use the POCS algorithm. 
Given a basis $\{ f_l \}_{l=1}^L$ of $\partial_{\boldsymbol{\xi}} ( \mathcal{F}^2 )$, the exactness conditions \cref{eq:exactness_cond_volume} can again be formulated as the linear system 
\begin{equation}\label{eq:POCS_volume_linear}
	A \mathbf{w} = \mathbf{b}
\end{equation}
for the weight vector $\mathbf{w} = [w_1,\dots,w_N]^T$, where 
\begin{equation}\label{eq:volexact}
	A = 
	\begin{bmatrix} 
		f_1(\mathbf{x}_1) & \dots & f_1(\mathbf{x}_N) \\ 
		\vdots & & \vdots \\ 
		f_L(\mathbf{x}_1) & \dots & f_L(\mathbf{x}_N)
	\end{bmatrix}, 
	\quad 
	\mathbf{b} = 
	\begin{bmatrix} 
		I[f_1] \\ \vdots \\ I[f_L]
	\end{bmatrix}.
\end{equation} 
Like before, this yields the closed convex set 
\begin{equation} 
	C = \left\{ \, \mathbf{w} \in \R^N \mid A \mathbf{w} = \mathbf{b} \, \right\}.
\end{equation} 
The second restriction on the volume quadrature is positivity, i.e., $w_n > 0$ for all $n=1,\dots,N$. 
However, because of conditioning considerations, we further restrict the quadrature weights to lie above a certain threshold $w_{\mathrm{min}}$ and choose  
\begin{equation} 
	D = \left\{ \, \mathbf{w} \in \R^N \mid w_n \geq w_{\mathrm{min}},\, n=1,\dots,N \, \right\}.
\end{equation}
The motivation for introducing the threshold $w_{\mathrm{min}}$ is that it bounds the condition number of the inverse diagonal norm matrix by $\| P^{-1} \| \leq 1/w_{\mathrm{min}}$. 
Without such a threshold, a single small weight $w_n \approx 0$ could yield the inverse norm matrix to have an undesirable high condition number, which might translate into an ill-conditioned MFSBP operator $D_{\boldsymbol{\xi}} = P^{-1} Q_{\boldsymbol{\xi}}$. 
In our implementation, we choose $w_{\mathrm{min}} = \frac{1}{10 N}$, although we do not claim the optimality of this choice. 
The orthogonal projections onto $C$ and $D$ are given by 
\begin{equation}\label{eq:volpocs}
\begin{aligned}
	\operatorname{proj}_{C}(\mathbf{w}) 
		& =  \mathbf{w} + A^+ \left( \mathbf{b} - A\mathbf{w} \right), \\ 
	\left[ \operatorname{proj}_{D}(\mathbf{w}) \right]_n
		& = \max(w_n, w_{\mathrm{min}} ), \quad n=1,\dots,N,
\end{aligned}
\end{equation}

Similar arguments as in \cref{rem:intersection_surface} ensure that the intersection of $C$ and $D$ is non-empty and that the POCS algorithm converges to a positive and $\partial_{\boldsymbol{\xi}} ( \mathcal{F}^2 )$-exact volume quadrature.

\begin{remark}\label{rem:basis_volume} 
	Following up on the discussion in \cref{rem:basis_surface}, the linear system \cref{eq:POCS_volume_linear} can also be formulated using a generating set of $\partial_{\boldsymbol{\xi}} ( \mathcal{F}^2 )$, if no basis is readily available. 
	If $\{ f_k \}_{k=1}^K$ is a basis of $\mathcal{F}$, then $\{ g_k \}_{k=1}^{K^2}$ with $g_{i + K(j-1)} = \partial_{\boldsymbol{\xi}} (f_i f_j)$, $i,j=1,\dots,K$, spans $\partial_{\boldsymbol{\xi}} ( \mathcal{F}^2 )$. 
\end{remark}

\subsection{Using POCS for constructing $Q_A$} 
\label{app:POCS_QA}

We now comment on some computational details for constructing the anti-symmetric matrix $Q_A$ discussed in \cref{sub:constr_Q}, which has to satisfy the exactness condition \cref{eq:QA_exactness}. 
We can determine such a matrix $Q_A$ using the POCS algorithm. 
To this end, note that the exactness condition \cref{eq:QA_exactness} and the restriction to anti-symmetric $Q_A$'s ($Q_A^T = - Q_A$) define nonempty closed convex sets,
\begin{equation}\label{eq:POCS_sets}
\begin{aligned}
	C & = \left\{ \, Q_{A} \in \R^{N \times N} \mid Q_{A} V = P V_{\boldsymbol{\xi}} - \frac{1}{2} B_{\boldsymbol{\xi}} V \, \right\}, \\ 
	D & = \left\{ \, Q_{A} \in \R^{N \times N} \mid Q_{A} + Q_{A}^T = 0 \, \right\}, 
\end{aligned}
\end{equation}
in the space of $N \times N$ matrices. 
If the intersection between the two convex sets $C$ and $D$ is non-empty, then the POCS algorithm finds a point in the intersection by alternatingly projecting onto $C$ and $D$. 
The orthogonal projections onto the sets in \cref{eq:POCS_sets} are given by 
\begin{equation}\label{eq:POCS_proj}
\begin{aligned}
	\operatorname{proj}_{C}(Q_{A}) 
		& = Q_{A} + \left(P V_{\boldsymbol{\xi}} - \frac{1}{2} B_{\boldsymbol{\xi}} V - Q_{A} V \right) V^+ , \\ 
	\operatorname{proj}_{D}(Q_{A}) 
		& = \left( Q_{A} - Q_{A}^T \right)/2, 
\end{aligned}
\end{equation}
respectively. Here, $V^+$ is the Moore--Penrose pseudo-inverse of $V$.
Since $V$ has linearly independent columns, $V^T V$ is invertible and $V^+ = (V^T V)^{-1} V^T$ constitutes a right inverse. 
See \cite{higham1988symmetric,escalante2011alternating} for more details.

\subsection{Robust implementation} 

In some cases, the coefficient matrix $A$ in the exactness conditions $A \mathbf{w} = \mathbf{b}$, see \cref{eq:POCS_surface_linear,eq:POCS_volume_linear}, was observed to be ill-conditioned. 
We formulated and solved the exactness conditions for a reduced discrete orthonormal basis in such cases. 
Starting from a potentially over-complete spanning set $\{ g_l \}_{l=1}^L$ of a function space $\mathcal{G}$ ($\mathcal{G} = \mathcal{F}^2$ or $\mathcal{G} = \partial_{\boldsymbol{\xi}}(\mathcal{F}^2)$), we will now describe an algorithm that computes an orthonormal basis spanning approximately the same space. 
We start by computing the coefficient matrix $A$ in the exactness conditions \cref{eq:POCS_surface_linear,eq:POCS_volume_linear} using the original spanning set $\{ g_l \}_{l=1}^L$. 
We then determine the singular value decomposition (SVD) of $A$. 
Recall that the SVD of $A \in \R^{L \times N}$ with $L \leq N$ is given by 
\begin{equation}\label{eq:SVD}
	A = U \Sigma V^T,
\end{equation} 
where $U \in \R^{L \times L}$ and $V \in \R^{N \times N}$ are orthogonal matrices and $\Sigma \in \R^{L \times N}$ is a rectangular diagonal matrix with non-negative singular values $\sigma_1 \geq \dots \geq \sigma_L$ on the diagonal. 
Let $\varepsilon > 0$ be a fixed threshold that remains to be determined. 
We can now ``compress" the original set $\{ g_l \}_{l=1}^L$ in the sense that a sizeable rank-deficient collection of these functions is replaced with an orthonormal basis $\{ h_m \}_{m=1}^M$ whose dimension is the numerical rank of the spanning set up to precision $\varepsilon$. 
The basis is orthonormal in the sense that $\mathbf{h_m}^T \mathbf{h_n}$ is equal to one if $m=n$ and zero otherwise. 
We get the orthonormal basis functions as 
\begin{equation}
	h_m(x) = \frac{1}{\sigma_m} \sum_{l=1}^N u_{l,m} g_l(x), \quad 
	m=1,\dots,M,
\end{equation} 
where $M$ is the largest integer such that $\sigma_M \geq \varepsilon$. 
If $A$ was invertible and $\epsilon = 0$, this construction is equivalent to multiplying both sides of the SVD $A = U \Sigma V^T$ from the left-hand side with $\Sigma^{-1} U^T$, yields 
\begin{equation}
	V^T = \Sigma^{-1} U^T A. 
\end{equation}
The functions $h_1,\dots,h_M$ constitute an orthonormal basis for the span of the input functions $g_1,\dots,g_L$ to precision $\varepsilon$. 
Since the basis is orthonormal, the condition number of the resulting coefficient matrix $A_{\varepsilon}$ is equal to one. 
The new basis can be used to calculate a new corresponding right side $\mathbf{b}_{\varepsilon}$ and therefore allows us to represent the exactness constraints in a stable manner. 
A crucial part is selecting the threshold $\varepsilon$. 
Too large values discard too many dimensions and lead to a matrix $A_{\varepsilon}$ that describes fewer exactness conditions than the original one. 
At the same time, too small values inflate the norm of $b_{\varepsilon}$. 
In our implementation, we solve $A_{\varepsilon} \mathbf{w} = \mathbf{b}_{\varepsilon}$ for $\varepsilon \in \{10^{-16},\dots,10^{-5}\}$ and ultimately select the resulting weight vector $\mathbf{w}_{\varepsilon}$ that minimizes $\|A \mathbf{w}_{\varepsilon} - \mathbf{b} \|_2$, i.e., the weights that best-fit the original exactness conditions.

\section*{Acknowledgements} 
This research was supported by AFOSR \#F9550-18-1-0316, the US DOD (ONR MURI) grant \#N00014-20-1-2595, the US DOE (SciDAC program) grant \#DE-SC0012704, Vetenskapsr\r{a}det Sweden grant 2018-05084 VR and 2021-05484, the Swedish e-Science Research Center (SeRC), and the Gutenberg Research College, JGU Mainz.
Furthermore, it was supported through the program ``Oberwolfach Research Fellows" by the Mathematisches Forschungsinstitut Oberwolfach in 2022. 
We also thank Maximilian Winkler for helpful discussions on the POCS algorithm.

\small
\bibliographystyle{siamplain}
\bibliography{references}

\end{document}